\crefname{hypothesis}{Hypothesis}{Hypotheses}
\title{Provably Accelerated Decentralized  Gradient Methods Over Unbalanced Directed Graphs.\thanks{Submitted to the editors DATE.
		\funding{L.S. was partially supported by the Shanghai Science and Technology Program under Project 21JC1400600 and the National Natural Science Foundation of China under Grant 12171093. S.P. was supported by the National Natural Science Foundation of China under Grant 62003287 and 62373316, by the Guangdong Talent Program under Grant 2021QN02X216, by Shenzhen Research Institute of Big Data under Grant J00120190011, and by Shenzhen Science and Technology Program under Grant
			RCYX202106091032290. M.Y. was supported by NSF grant DMS-2012439, the Guangdong Provincial Key Laboratory of Mathematical Foundations for Artificial Intelligence, and Shenzhen Science and Technology Program ZDSYS20211021111415025. Most of the work was done while the first author visited the School of Data Science, The Chinese University of Hong Kong, Shenzhen.  (Corresponding authors: M.Y., S.P.)}}}
\author{Zhuoqing Song\thanks{Shanghai Center for Mathematical Sciences, Fudan University, Shanghai, China (\email{zqsong19@fudan.edu.cn}).}
	\and Lei Shi\thanks{School of Mathematical Sciences, Fudan University; Shanghai Key Laboratory for Contemporary Applied Mathematics; Shanghai Artificial Intelligence Laboratory, China (\email{leishi@fudan.edu.cn}).}
	\and Shi Pu\thanks{School of Data Science, Shenzhen Research Institute of Big Data, The Chinese	University of Hong Kong, Shenzhen (CUHK-Shenzhen), China (\email{pushi@cuhk.edu.cn}). }
	\and Ming Yan\thanks{School of Data Science, The Chinese University of Hong Kong, Shenzhen (CUHK-Shenzhen), China (\email{yanming@cuhk.edu.cn}).}}
\def\eq#1{\begin{equation*}\begin{split}#1\end{split}\end{equation*}}
\def\eql#1#2{\begin{equation}{#1}\begin{split}#2\end{split}\end{equation}}
\def\subeql#1#2{\begin{equation}{#1}\left\{\begin{aligned}#2\end{aligned}\right.\end{equation}}
\def\subeqnum#1{\begin{subequations}\begin{align}#1\end{align}\end{subequations}}
\def\subeqnuml#1#2{\begin{subequations}{#1}\begin{align}#2\end{align}\end{subequations}}
\def\comeq#1{\overset{\mathrm{#1}}{=}}
\def\comleq#1{\overset{\mathrm{#1}}{\leq}}
\crefname{assumption}{Assumption}{Assumptions}
\def\pr#1{\left( #1 \right ) }
\def\ar#1{\left| #1 \right | }
\def\dr#1{\left\{#1\right\}}
\def\jr#1{\left< #1 \right>}
\def\calA{\mathcal{A}}
\def\calB{\mathcal{B}}
\def\calE{\mathcal{E}}
\def\calG{\mathcal{G}}
\def\calN{\mathcal{N}}
\def\aa{\pmb{\mathit{a}}}
\newcommand\bb{\boldsymbol{\mathit{b}}}
\renewcommand\gg{\boldsymbol{\mathit{g}}}
\newcommand\pp{\boldsymbol{\mathit{p}}}
\newcommand\uu{\boldsymbol{\mathit{u}}}
\newcommand\vv{\boldsymbol{\mathit{v}}}
\newcommand\yy{\boldsymbol{\mathit{y}}}
\newcommand\zz{\boldsymbol{\mathit{z}}}
\newcommand\xx{\boldsymbol{\mathit{x}}}
\renewcommand\AA{\boldsymbol{\mathit{A}}}
\newcommand\BB{\boldsymbol{\mathit{B}}}
\newcommand\CC{\boldsymbol{\mathit{C}}}
\newcommand\FF{\boldsymbol{\mathit{F}}}
\newcommand\GG{\boldsymbol{\mathit{G}}}
\newcommand\II{\boldsymbol{\mathit{I}}}
\newcommand\UU{\boldsymbol{\mathit{U}}}
\newcommand\WW{\boldsymbol{\mathit{W}}}
\newcommand\VV{\boldsymbol{\mathit{V}}}
\newcommand\XX{\boldsymbol{\mathit{X}}}
\newcommand\YY{\boldsymbol{\mathit{Y}}}
\newcommand\ZZ{\boldsymbol{\mathit{Z}}}
\def\nm#1{\left\| #1 \right\|}
\def\nt#1{\left\| #1 \right\|}
\def\la{\lambda}
\def\tp{^\top}
\newcommand \arr{\rightarrow}
\def \Diag#1{\textbf{Diag}\left(#1\right)}
\newcommand{\zero}{\mathbf{0}}
\newcommand{\one}{\mathbf{1}}
\newcommand \inv{^{-1}}
\def\MatSize#1#2{\mathbb{R}^{#1\times#2}}
\newcommand \Real{\mathbb{R}}
\newcommand\na{\nabla}
\def\gF#1{\nabla\FF\pr{#1}}
\def\gf{\nabla f}
\newcommand\xa{\overline{\xx}}
\newcommand\ya{\overline{\yy}}
\newcommand\za{\overline{\zz}}
\newcommand\ga{\overline{\gg}}
\newcommand\alp{\alpha}
\newcommand\bet{\beta}
\newcommand{\mm}[2]{{\left\vert\kern-0.25ex\left\vert\kern-0.25ex\left\vert #2
		\right\vert\kern-0.25ex\right\vert\kern-0.25ex\right\vert}_{#1}}
\def\mt#1{\mm{}{#1}}
\def \mC#1{\mm{\rm C}{#1}}
\def \nC#1{\nm{#1}_{\rm C}}
\newcommand\Con{\boldsymbol{\Pi}_{\rm C}}
\newcommand\CM{\big(\CC - \frac{\pp\one\tp}{n}\big)}
\newcommand\gap{\theta}
\newcommand\tratC{\delta_{\rm C}}
\newcommand\vmi{v_{-}}
\newcommand\pa{c_+}
\newcommand\pb{c_2}
\newcommand\pc{c_1}
\newcommand\pg{c_3}
\newcommand\pk{c_4}
\newcommand\qkn{\widehat{q}}
\newcommand\qk{q}
\newcommand\wa{w_1}
\newcommand\NAPD{\textsc{APD}}
\newcommand\NAPDSC{\textsc{APD-SC}}
\def\rev#1{#1}
\newcommand\Ly{\Phi^{(1)}}
\newcommand\Lya{\Phi^{(2)}}
\newcommand\Lyp{\Phi}
\newcommand\Lyn{\Phi}
\def\rev#1{{#1}}
\def\prn#1{(#1)}
\def\prb#1{\big(#1\big)}
\def\prB#1{\Big(#1\Big)}
\def\nmb#1{\big\|#1\big\|}
\def \nCb#1{\nmb{#1}_{\rm C}}
\def\drB#1{\Big\{#1 \Big\}}
\def\babs#1{\big|#1\big|}
\def\drn#1{\{#1\}}  
\def\sp#1{#1}
\begin{document}
	
	\maketitle
	
	\begin{abstract}
		We consider the decentralized optimization problem, where a network of $n$ agents aims to collaboratively minimize the average of their individual smooth and convex objective functions through peer-to-peer communication in a directed graph. To tackle this problem, we propose two accelerated gradient tracking methods, namely $\NAPD$ and $\NAPDSC$, for non-strongly convex and strongly convex {\sp objective functions}, respectively. We show that $\NAPD$ and $\NAPDSC$ converge at the rates $O\pr{\frac{1}{k^2}}$ and $O\pr{\pr{1 - C\sqrt{\frac{\mu}{L}}}^k}$, respectively, up to constant factors depending only on the mixing matrix.    $\NAPD$ and $\NAPDSC$ are the first decentralized methods over unbalanced directed graphs that achieve the same provable acceleration as centralized methods. Numerical experiments demonstrate the effectiveness of both methods.
	\end{abstract}
	
	\begin{keywords}
		{Decentralized optimization, Nesterov's accelerated gradient, directed graph}
	\end{keywords}
	
	\begin{AMS}
		{90C25, 90C30 }
	\end{AMS}
	
	\section{Introduction}
	In this paper, we consider the following decentralized optimization problem within a system of $n$ agents:
	\eql{\label{eq:problem1}}{
		\min_{\xx\in\Real^p} f\pr{\xx} = \frac{1}{n}\sum_{i=1}^{n}f_i\pr{\xx},
	}
	where {$f_i\pr{\xx}$ denotes a smooth and convex function privately held  by agent $i$}, while  $\xx$ represents  the global decision variable. Decentralized optimization finds extensive applications in contemporary scientific computing and engineering domains, including large-scale machine learning~\cite{boyd2011distributed,cevher2014convex,cohen2017projected,forrester2007multi,nedic2017fast}, wireless networks~\cite{baingana2014proximal,cohen2017distributed,mateos2012distributed}, and cognitive networks~\cite{chen2008robust,mateos2010distributed}, among others. Our work assumes that agents communicate through an unbalanced directed graph to solve the problem~\eqref{eq:problem1}, where agents can only receive/send information from/to its in/out-neighbors.
	
	Decentralized optimization can be traced back to the 1980s~\cite{bertsekas1983distributed,tsitsiklis1986distributed,tsitsiklis1984problems}. In recent years, the field has witnessed extensive research due to the rapid advancement of big data applications and the demand for large-scale distributed computing. Notable developments include the distributed subgradient descent (DGD) method~\cite{nedic2009distributed}, which solves problem~\eqref{eq:problem1} across undirected networks for both strongly and non-strongly convex objective functions.  DGD-based methods~\cite{nedic2010asynchronous,nedic2014distributed,nedic2009distributed,ram2010distributed,yuan2016convergence} employ diminishing stepsizes and encounter sublinear convergence rates, even for smooth and strongly convex functions. Dual-based methods~\cite{iutzeler2015explicit,makhdoumi2017convergence,scaman2017optimal,terelius2011decentralized} require gradients of the dual functions,  which are often challenging to compute in practice. Approaches such as EXTRA~\cite{shi2015extra}, NIDS~\cite{li2019decentralized}, and gradient tracking methods~\cite{di2015distributed,di2016next,nedic2017achieving,qu2017harnessing,xu2015augmented} achieve linear convergence rates for smooth and strongly convex functions over undirected networks with (symmetric) doubly stochastic mixing matrices. Variants of the aforementioned methods involving communication compression, asynchronous updates, composite optimization, and stochastic gradients can be found in works such as~\cite{liu2020linear,pu2020distributed,sun2019distributed,tian2020achieving,wu2017decentralized}.
	
	However, the utilization of doubly-stochastic (both row and column stochastic) mixing matrices, which heavily depend on either undirected graphs or balanced directed graphs, might not be practical for certain applications~\cite{nedic2017achieving,xi2018linear,xi2017add,xin2018linear,xiong2020privacy}. For example, in wireless sensor networks, the directional communication induced by sensors broadcasting at varying power levels leads to directed communication~\cite{xin2018linear}. The removal of slower communication links in an undirected network could result in directional communication~\cite{xi2018linear}, and computer networks inherently possess directed communication~\cite{xiong2020privacy}. Constructing a doubly stochastic matrix for a directed graph often proves to be impractical. Existing approaches for creating doubly-stochastic matrices in directed graphs rely on carefully designed procedures that are resource-intensive and demand ideal communication conditions; refer to~\cite{gharesifard2012distributed}. Furthermore, even if these strategies are applicable, the generated doubly stochastic matrices might exhibit unfavorable spectral properties, leading to inefficient information fusion. As a result, in the context of directed communication, the weighted mixing matrix is commonly designed to be either row or column stochastic~\cite{pu2020push,xi2018linear,xin2018linear}.
	
	Numerous endeavors have been dedicated to solving the decentralized optimization problem within the context of unbalanced directed graphs. Subgradient-Push~\cite{nedic2014distributed} combines the push-sum protocol~\cite{kempe2003gossip} with DGD to attain a sublinear convergence rate across time-varying unbalanced directed graphs. The Push-DIGing/ADD-OPT approaches presented in~\cite{nedic2017achieving,xi2017add} amalgamate the push-sum protocol with gradient tracking techniques, achieving linear convergence for smooth and strongly convex functions through the sole use of column stochastic mixing matrices. Several works focus on employing solely row stochastic matrices~\cite{xi2018linear,xin2018linear}. The Push-Pull/$\calA\calB$ methods introduced in~\cite{pu2020push, xin2018linear} employ a row stochastic matrix for decision variable mixing and a column stochastic matrix for gradient tracker mixing. These methods also achieve linear convergence for smooth and strongly convex objectives. For more related works, refer to~\cite{nedic2014distributed,olshevsky2018fully,qureshi2020s,saadatniaki2020decentralized,song2021compressed,tsianos2012consensus,xi2017distributed,xin2019distributed,xu2020accelerated,zhangfully} and the references therein. However, it's important to note that the convergence rates of the aforementioned dual-free methods cannot surpass $O\prb{\frac{1}{k}}$ for non-strongly convex objective functions and $O\prb{(1 - C(\frac{\mu}{L}))^k}$ for strongly convex objectives, up to constant factors.
	
	Nesterov's accelerated gradient descent method, initially introduced in~\cite{nesterov1983method}, stands as a celebrated approach for achieving the optimal convergence rate for smooth and convex objective functions concerning first-order oracle complexity. In recent years, continuous efforts have been directed towards the development of accelerated decentralized gradient descent methods. Many of these methods are tailored to undirected graphs or, more broadly, utilize doubly stochastic mixing matrices~\cite{dvinskikh2019decentralized,fallah2019robust,jakovetic2014fast,kovalev2020optimal,li2020decentralized,li2020revisiting,li2021accelerated,qu2019accelerated,ye2020decentralized}. Among them, OPAPC~\cite{kovalev2020optimal} attains lower bounds on both gradient computation complexity and communication complexity for undirected graphs, as established in~\cite{scaman2017optimal}. Certain works~\cite{dvinskikh2019decentralized,jakovetic2014fast,li2020decentralized,li2020revisiting,ye2020decentralized} depend on inner loops consisting of multiple consensus steps to ensure accelerated convergence. However, the reliance on inner loops could potentially limit these methods' applications due to communication bottlenecks in decentralized computation~\cite{lan2020communication,li2021accelerated,qu2019accelerated}. Single-loop based methods, as explored in~\cite{qu2019accelerated}, yield convergence rates of $O\prb{\frac{1}{k^{1.4-\epsilon}}}$ for non-strongly convex objectives and $O\prb{\prb{1 - C(\frac{\mu}{L})^{\frac{5}{7}}}^k}$ for strongly convex objectives, under the assumption of doubly stochastic mixing matrices. A recent study~\cite{li2021accelerated} establishes convergence rates of $O\prb{\frac{1}{k^2}}$ and $O\prb{(1 - C\sqrt{\frac{\mu}{L}})^k}$ over time-varying balanced graphs. While employing row and column stochastic mixing matrices, the work in~\cite{xin2019distributed2} introduces an accelerated $\calA\calB$ method, demonstrating linear convergence properties under strongly convex objectives. However, it does not exhibit a match to the optimal convergence rate attained by Nesterov's acceleration.
	
	From theoretical aspects, Nesterov's accelerated gradient methods are vulnerable to inexact gradient estimation. With general inexact first-order oracles, accelerated gradient methods accumulate errors and have slower convergence rates or even diverge \cite[Sections 5.2 and 6]{devolder2014first}, \cite[Section~5]{devolder2013first}. After the {emergence} of gradient tracking methods on undirected graphs, much effort has been made to deal with the more involved consensus errors for non-accelerated methods on directed graphs and preserve the linear convergence~\cite{nedic2017achieving,pu2020push,xi2018linear,xi2017add,xin2018linear}. However, Nesterov's accelerated gradient methods require dedicated proofs, and the errors can accumulate quickly. Thus, dealing with the more complicated consensus errors on directed graphs in Nesterov's accelerated gradient methods is even more challenging. Existing decentralized accelerated gradient methods on undirected graphs~\cite{kovalev2020optimal,li2021accelerated,qu2019accelerated} cannot be extended to {the cases of directed graphs} easily. This motivated us to study inexact Nesterov's accelerated gradient methods on directed graphs by probing into the structure of the more complicated consensus errors.  
	
	The contribution of this paper is to extend Nesterov's accelerated gradient methods to {distributed optimization on} unbalanced directed graphs and provide theoretical guarantees for the optimal convergence rates (up to constants depending only on the mixing matrix). We introduce two innovative decentralized gradient methods: the Accelerated Push-DIGing ($\NAPD$) tailored for non-strongly convex objective functions and its variant, $\NAPDSC$, specifically designed for strongly convex objective functions. To elaborate, we merge the Push-DIGing/ADD-OPT techniques~\cite{nedic2017achieving,xi2017add} with the accelerated first-order scheme AGM~\cite{allen2017linear}. Notably, $\NAPD$ and $\NAPDSC$ exclusively employ column stochastic mixing matrices and attain convergence rates of $O(\frac{1}{k^2}$ for non-strongly convex objectives and $O\prb{(1 - C\sqrt{\frac{\mu}{L}})^k}$ for strongly convex objectives. To our best knowledge, these methods are the first decentralized methods to achieve such accelerated convergence rates and to match the optimal convergence rates (up to constants depended only on the mixing matrix) on unbalanced directed graphs.
	
	\section{Notation and Assumptions}
	We denote the Euclidean norm and its inner product by $\nm{\cdot}$ and $\jr{\cdot, \cdot}$, respectively. All-ones and all-zeros vectors are denoted by $\one$ and $\zero$, respectively. Their dimensions are omitted when evident from the context.
	
	Let $\xx^*$ be one solution to the problem~\eqref{eq:problem1}. We employ lowercase bold letters to represent local variables and subscripts to distinguish variable iterations and owners. 
	For instance, $\xx_{k,i}$ denotes the value of agent $i$'s local variable $\xx$ in the $k$-th iteration.
	Local variables are concatenated into $n$-by-$p$ matrices, represented by uppercase bold letters.
	For example,
	$\XX_k = [\xx_{k,1}, \cdots, \xx_{k,n}]\tp\in\MatSize{n}{p}$, $\gF{\XX_k} = \left[\gf_1\pr{\xx_{k,1}}, \cdots, \gf_n\pr{\xx_{k,n}}\right]\tp \in\MatSize{n}{p}.$ To facilitate the analysis of these concatenated $n$-by-$p$ matrices, we introduce the following matrix norms for $n$-by-$p$ and $n$-by-$n$ matrices, respectively.
	\begin{definition}\label{def:mmA}
		Given a vector norm $\nm{\cdot}_*$, we define the corresponding matrix norm for an $n$-by-$p$ matrix $\AA$ as
		$\mm{*}{\AA} 
		= \sqrt{\sum_{i=1}^{p}\nm{\AA_{:,i}}_*^2}.$ 
		In particular, when $\nm{\cdot}_*=\|{\cdot}\|$, $\mm{}{\AA}$ coincides with the Frobenius norm.
	\end{definition}
	
	\begin{definition}\label{def:nmA}
		Given a vector norm $\nm{\cdot}_*$, the induced norm of an $n$-by-$n$ matrix $\WW$ is defined as
		$\nm{\WW}_* = \sup\limits_{\xx \neq 0} \frac{\nm{\WW\xx}_*}{\nm{\xx}_*}.$
	\end{definition}
	
	\begin{lemma}[{\cite[Lemma 5]{pu2020push}}]\label{eq: basic calculus of mm}
		For any matrices $\AA\in\MatSize{n}{p}$, $\WW\in\MatSize{n}{n}$, and a vector norm $\nm{\cdot}_*$, we have $\mm{*}{\WW\AA} \leq \nm{\WW}_* \mm{*}{\AA}$. For any vectors $\aa\in \Real^n$, $\bb\in \Real^p$, $\mm{*}{\aa\bb\tp} = \nm{\aa}_* \nm{\bb}$.
	\end{lemma}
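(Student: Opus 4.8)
The plan is to reduce both bounds to column-wise estimates, using that by Definition~\ref{def:mmA} the quantity $\mm{*}{\cdot}$ is precisely the \emph{Euclidean} norm of the vector whose $j$-th entry is the $\nm{\cdot}_*$-norm of the $j$-th column $\AA_{:,j}$. The two elementary facts I will rely on throughout are: first, that the two matrix constructions involved interact column-wise; and second, that the outer Euclidean norm is monotone and positively homogeneous on nonnegative vectors, i.e. $\zero\le\uu\le\vv$ entrywise implies $\nm{\uu}\le\nm{\vv}$, and $\nm{c\uu}=c\nm{\uu}$ for any $c\ge 0$.

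For the first inequality, the key observation is that matrix multiplication acts column-wise, $\pr{\WW\AA}_{:,j}=\WW\AA_{:,j}$. By Definition~\ref{def:nmA} the induced norm satisfies $\nm{\WW\AA_{:,j}}_*\le\nm{\WW}_*\nm{\AA_{:,j}}_*$ for every $j$, so the vector of column $\nm{\cdot}_*$-norms of $\WW\AA$ is bounded entrywise by $\nm{\WW}_*$ times that of $\AA$. Since all these entries are nonnegative, applying first the monotonicity and then the homogeneity of the outer norm yields
\eq{
\mm{*}{\WW\AA}\le\nm{\WW}_*\,\mm{*}{\AA}.
}

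For the second inequality, I would note that the $j$-th column of $\aa\bb\tp$ equals $b_j\aa$, where $b_j$ is the $j$-th entry of $\bb$, whence $\nm{\pr{\aa\bb\tp}_{:,j}}_*=\abs{b_j}\,\nm{\aa}_*$. Factoring the common nonnegative scalar $\nm{\aa}_*$ out of the outer norm and using that $\nm{\pr{\abs{b_1},\dots,\abs{b_p}}}=\nm{\bb}$ gives $\mm{*}{\aa\bb\tp}=\nm{\aa}_*\nm{\bb}$, an equality that in particular establishes the claimed bound.

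I do not expect a genuine obstacle here: both parts are short. The only point demanding care is to keep the two norms cleanly separated---the inner $\nm{\cdot}_*$ acting on individual columns versus the outer Euclidean norm aggregating the column norms---and to confirm that the outer norm respects the entrywise order, which for the Euclidean norm follows at once from $\nm{\uu}^2=\sum_i u_i^2$.
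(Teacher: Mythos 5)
Your proof is correct, and it is the natural column-wise argument: both inequalities follow from Definition~\ref{def:mmA} by bounding each column norm (via the induced norm for $\WW\AA$, and via $\pr{\aa\bb\tp}_{:,j}=b_j\aa$ for the outer product) and then using monotonicity and homogeneity of the outer Euclidean norm on nonnegative vectors. The paper itself gives no proof of this lemma---it is imported by citation from Lemma~5 of \cite{pu2020push}---and your argument matches the standard proof given there, including the observation that the second bound actually holds with equality.
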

	
	The following lemma is derived directly from the elementary inequality: $2\jr{\aa, \bb} \leq \pr{\la - 1}\nm{\aa}^2 + \frac{1}{\la - 1}\nm{\bb}^2$ $(\forall \la > 1)$ and Definition~\ref{def:mmA}.
	\begin{lemma}\label{lem:Hilbert}
		For any vector norm $\nm{\cdot}_*$ induced by the inner product $\jr{\cdot, \cdot}_*$ and any $\la > 1$, we have
		$\nm{\aa + \bb}_*^2 \leq  \la\nm{\aa}_*^2 + \frac{\la}{\la - 1}\nm{\bb}_*^2,\ \forall \aa, \bb\in \Real^p,$ $\mm{*}{\AA + \BB}^2 \leq \la \mm{*}{\AA}^2 + \frac{\la}{\la - 1 } \mm{*}{\BB}^2,\ \forall \AA, \BB\in \MatSize{n}{p}.$
	\end{lemma}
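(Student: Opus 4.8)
The plan is to reduce both inequalities to the single elementary estimate recalled just before the statement, read in the inner product $\jr{\cdot, \cdot}_*$ that induces $\nm{\cdot}_*$. First I would settle the vector bound. Because $\nm{\cdot}_*$ is induced by an inner product, I can expand $\nm{\aa + \bb}_*^2 = \nm{\aa}_*^2 + 2\jr{\aa, \bb}_* + \nm{\bb}_*^2$, so everything hinges on controlling the cross term $2\jr{\aa, \bb}_*$. For this I would apply the stated inequality $2\jr{\aa, \bb}_* \leq \pr{\la - 1}\nm{\aa}_*^2 + \frac{1}{\la - 1}\nm{\bb}_*^2$, valid for any $\la > 1$ and proved in one line by expanding the nonnegative quantity $\nm{\sqrt{\la - 1}\,\aa - \frac{1}{\sqrt{\la - 1}}\bb}_*^2 \geq 0$. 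Substituting this and collecting terms, the coefficient of $\nm{\aa}_*^2$ becomes $1 + \pr{\la - 1} = \la$ and that of $\nm{\bb}_*^2$ becomes $1 + \frac{1}{\la - 1} = \frac{\la}{\la - 1}$, which is exactly the first claimed inequality.

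For the matrix inequality I would lift the vector bound column by column using Definition~\ref{def:mmA}. Since the outer norm in that definition is Euclidean, squaring turns it into a sum of squares, giving $\mm{*}{\AA}^2 = \sum_{j=1}^{p}\nm{\AA_{:, j}}_*^2$, and likewise for $\BB$ and for $\AA + \BB$. Applying the vector inequality already established to each column pair $\pr{\AA_{:, j}, \BB_{:, j}}$ and summing over $j$ yields $\mm{*}{\AA + \BB}^2 \leq \la\sum_{j}\nm{\AA_{:, j}}_*^2 + \frac{\la}{\la - 1}\sum_{j}\nm{\BB_{:, j}}_*^2 = \la\,\mm{*}{\AA}^2 + \frac{\la}{\la - 1}\mm{*}{\BB}^2$. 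The final ``especially'' clause then follows immediately, since both $\nm{\cdot}$ and $\nC{\cdot}$ are induced by inner products and therefore meet the hypothesis.

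I do not expect a genuine obstacle here; the lemma is essentially one completing-the-square estimate plus bookkeeping. The only two points that require a moment of care are that the cross-term inequality must be used with respect to the $*$-inner product rather than the standard Euclidean one (it holds verbatim by the same argument), and that the passage from the triple-bar norm to a sum over columns relies precisely on the outer norm in Definition~\ref{def:mmA} being Euclidean, so that $\mm{*}{\cdot}^2$ decomposes additively. Both are immediate, so I anticipate the write-up will be short.
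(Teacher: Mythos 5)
Your proposal is correct and follows exactly the route the paper intends: the paper states that the lemma ``can be derived directly by the elementary inequality $2\jr{\aa,\bb}\leq(\la-1)\nm{\aa}^2+\frac{1}{\la-1}\nm{\bb}^2$ and Definition~\ref{def:mmA},'' and your write-up simply fills in those two steps (completing the square for the cross term in the $*$-inner product, then summing the columnwise bounds since the outer norm in Definition~\ref{def:mmA} is Euclidean). No gaps.
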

	
	The communication between agents is characterized by a strongly connected directed graph $\calG = \pr{\calN, \calE}$, where $\calN = \dr{1, 2, \cdots, n}$ and $\calE \subset \calN \times \calN  $ is the edge set.
	We assume that the mixing matrix $\CC$ satisfies the following assumption.
	\begin{assumption}\label{assp:C}
		The matrix $\CC$ is supported by $\calG$, meaning that the indices of all nonzero entries in $\CC$ are in the set $\calE$. The matrix $\CC$ is both regular and column stochastic, implying that all entries of $\CC$ are nonnegative, $\one\tp\CC = \one\tp$, and there exists some $\ell \geq 0$ such that all entries of $\CC^\ell$ (the $\ell$-th power of $\CC$) are positive.
	\end{assumption}
	
	Such column stochastic matrix $\CC$ can be constructed as follows: each agent $i$ assigns a positive weight $\CC_{ji}$ to each agent $j$ in its out-neighborhood and also a positive weight $\CC_{ii}$ to itself. For any disconnected $i$ and $j$, $\CC_{ji}$ is set to $0$ by default. Then each agent $i$ performs a scaling step to ensure that $\sum_{j\in\calN}\CC_{ji} = 1$. As a result, a column stochastic matrix $\CC$ has been constructed in a distributed manner without communication. Due to the strong connectivity of $\mathcal{G}$, the matrix $\CC$ is irreducible. The combination of irreducibility and the positivity of the diagonal entries in $\mathbf{C}$ implies the regularity of $\mathbf{C}$. From the Perron-Frobenius theorem~\cite{berman1994nonnegative}, it follows that $\CC$ has a unique positive right eigenvector $\pp$ corresponding to the eigenvalue $1$, satisfying $\one\tp\pp = n$. The subsequent lemma can be derived in a manner similar to Lemma~3, Lemma~4, and Lemma~15 in~\cite{pu2020push}, with $\rho\pr{\cdot}$ denoting the spectral radius.
	\begin{lemma}[{\cite[Lemma 3]{song2021compressed}}]\label{lem:mixnm}
		The spectral radius of $\CC - \frac{\pp\one\tp}{n}$ is less than 1, i.e., $\rho\prb{\CC - \frac{\pp\one\tp}{n}} < 1.  $
		{ Let $\gap = \frac{1}{2}\prb{1 - \rho\prn{\CC - \frac{\pp\one\tp}{n}}}$. }
		Then, an invertible matrix $\widetilde{\CC}\in \MatSize{n}{n}$ induces a vector norm $\nC{\xx} = \|{\widetilde{\CC}\xx}\|$ for any $\xx\in \Real^n$, such that
		\eql{\label{eq:tratC1}}{
			\nC{\xx} &\leq \nm{\xx} \leq \tratC\nC{\xx}, \ \ 
			\mC{\AA} \leq \mt{\AA} \leq \tratC\mC{\AA}, \ \forall   \xx\in \Real^n, \AA\in \MatSize{n}{p},
		}
		\eql{\label{eq:nnmprop1}}{
			\nCb{\CC - \frac{\pp\one\tp}{n}} \leq 1 - \gap,  \quad
			\nCb{\II - \frac{\pp\one\tp}{n}} = 1  .
		}
	\end{lemma}
	
	The following lemma directly follows from the Perron-Frobenius theorem, and its proof can be found in~\cite{xi2017add,xin2019distributed,qureshi2020s}.
	\begin{lemma}\label{lem:vkconvg1}
		Let $\vv_{k+1}=\CC\vv_k$ and $\VV_k = \Diag{\vv_k}$. Under Assumption~\ref{assp:C}, there exist $\vmi\geq 1$, which depends only on the mixing matrix $\CC$ and $\vv_0$, such that
		\begin{align}\label{eq:vLyap1}
			\nC{\vv_{k} - \pp} \leq \pr{1 - \gap}^k\nC{\vv_0 - \pp}, \quad  \nmb{\VV_k\inv} \leq \vmi.
		\end{align}
	\end{lemma}
	
	We impose the following assumption on the objective functions in problem~\eqref{eq:problem1}.
	\begin{assumption}\label{assp:L}
		For every index $0\leq i\leq n$, the function  $f_i\pr{\xx}$ is $\mu$-strongly convex ($\mu \geq 0$) and $L$-smooth. In other words,
		for any vectors $\xx, \yy\in \Real^p$,
		\eq{
			&\jr{\na f_i(\xx) - \na f_i(\yy), \xx - \yy} \geq \mu\nm{\xx - \yy}^2,\ 
			\nm{\na f_i(\xx) - \na f_i(\yy)} \leq L\nm{\xx - \yy}.
		}
	\end{assumption}
	
	The scenario $\mu = 0$ corresponds to non-strongly convex $f_i$, while the case $\mu > 0$ corresponds to strongly convex $f_i$. This paper will encompass both non-strongly and strongly convex objective functions.
	
	According to~\cite[Lemma~3.5]{bubeck2015convex}, under Assumption~\ref{assp:L}, there holds for any $\xx, \yy\in \Real^p$,
	\eql{\label{eq:SVRGimp}}{
		f_i\pr{\yy} - f_i\pr{\xx} - \jr{\nabla f_i\pr{\xx}, \yy - \xx} \geq \frac{1}{2L}\nm{\nabla f_i(\yy) - \nabla f_i(\xx)}^2.
	}
	
	\section{The Accelerated Push-DIGing Methods  }
	
	\subsection{Algorithm development}
	The methods $\NAPD$ and $\NAPDSC$ are unified in~\cref{alg:smooth&strongly-convex}. We employ Option I for non-strongly convex objective functions ($\mu = 0$), corresponding to $\NAPD$, and Option II for strongly convex objective functions ($\mu > 0$),  corresponding to $\NAPDSC$. We let $\VV_k = \Diag{\vv_k}$ for $0\leq k\leq K$. In each iteration, agent $i$ maintains five local variables: $\xx_{k,i}, \yy_{k,i}, \zz_{k,i}, \gg_{k,i}\in\Real^p$ and $\vv_{k,i}\in \Real$.
	After $K$ iterations, the variable $\vv_{K, i}\inv\yy_{K, i}$ is computed locally and serves as a local estimate of $\frac{1}{n}\sum_{i=1}^n \yy_{K,i} $ at agent $i$. The formal convergence result is in~\cref{thm:smooth}.
	
	Let's illustrate the development of $\NAPD$ using the non-strongly convex case (Option I) as an example. 
	The formulation of $\NAPD$ incorporates key components from the AGM scheme~\cite{allen2017linear} and the Push-DIGing/ADD-OPT method~\cite{nedic2017achieving,xi2017add}. AGM stands out as a remarkable adaptation of Nesterov's accelerated gradient methods, seamlessly combining gradient and mirror descent principles. Here, we adopt a simplified version of AGM tailored to Euclidean space, wherein we substitute mirror descent with gradient descent.
	
	\begin{enumerate}[(i)]
		\item Simplified AGM ($\alp_k = O\pr{k}$ and $\tau_k = O\pr{\frac{1}{k}}$):
		\subeql{\label{eq:simplifiedAGM}}{
			\yy_{k+1} &= \xx_k - \frac{1}{L}\gf\pr{\xx_k},  \\
			\zz_{k+1} &= \zz_k - \frac{\alp_k}{L}\gf\pr{\xx_k},  \\
			\xx_{k+1} &= \pr{1 - \tau_{k+1}}\yy_{k+1} + \tau_{k+1}\zz_{k+1}.
		}
		\item Push-DIGing/ADD-OPT:
		\subeql{\label{eq:Push-DIGing}}{
			\vv_{k+1} &= \CC\vv_k,  \\
			\XX_{k+1} &= \CC\pr{\XX_k - \eta\GG_k},  \\
			\GG_{k+1} &= \CC\GG_k + \gF{\VV_{k+1}\inv\XX_{k+1}} - \gF{\VV_k\inv\XX_k}.
		}
	\end{enumerate}
	Combining~\eqref{eq:simplifiedAGM} and ~\eqref{eq:Push-DIGing} results in the update rule~\eqref{eq:update1} for Option~I ($\beta=0$).
	
	\begin{algorithm}[!ht]
		\caption{decentralized accelerated method for smooth convex functions}
		\label{alg:smooth&strongly-convex}
		\begin{algorithmic}
			\STATE{\textbf{Require:} iteration number: $K$;
				stepsize: $\eta > 0$;
				initial values: $\XX_0, \vv_0$ with $\vv_0 > \zero$ and  $\one\tp\vv_0 = n$;
				mixing matrix: $\CC$}
			
			\STATE{\textbf{Option I (APD for non-strongly convex objective functions):} \\Set $\pa\in (0,1/4]$, $w_1\in (0,\pa/5]$, $\alp_k = {1 + w_1 k}$, $\tau_k = \frac{\pa}{{1 + \wa k}}={\pa\over \alp_k}$, $\bet = 0$}
			
			\STATE{\textbf{Option II (APD-SC for strongly convex objective functions):} \\Set $\pa\in (0,1/4]$, $\alp_k \equiv \alp\geq 1$, $\tau_k \equiv \tau={\pa\over \alp}$, $\bet \in (0, \min\dr{\frac{\eta\alp\mu}{2}, \frac{\tau}{2},{\theta\over 4},{\theta^2\over 8\tau}}] $}
			
			\STATE{Initialize $\YY_0 = \XX_0$, $\ZZ_0 = \XX_0$, $\VV_0 = \Diag{\vv_0}$, $\GG_0 = \gF{\VV_0\inv\XX_0}$. }
			\FOR{$k = 0$ to $K-1$}
			\STATE
			\vspace{-20pt}
			\subeqnuml{\label{eq:update1}}{
				\vv_{k+1} &= \CC \vv_k,  \label{eq:vupda1}  \\
				\YY_{k+1} &= \CC\pr{\XX_k - \eta\GG_k}, \label{eq:Yupda1}  \\
				\ZZ_{k+1} &= \CC\pr{\pr{1 - \bet}\ZZ_k + \bet\XX_k - \alp_k\eta\GG_k}, \label{eq:Zupda1}  \\
				\XX_{k+1} &= \pr{1 - \tau_k}\YY_{k+1} + \tau_k\ZZ_{k+1}, \label{eq:Xupda1}  \\
				\GG_{k+1} &= \CC\GG_k + \gF{\VV_{k+1}\inv\XX_{k+1}} - \gF{\VV_k\inv\XX_k} \label{eq:Gupda1}
			}
			where $\VV_k = \Diag{\vv_k}$.
			\ENDFOR
			
			\STATE{ \textbf{Output:}
				$\VV_K\inv\YY_K$}
			
		\end{algorithmic}
	\end{algorithm}

	\subsection{Convergence rate}
	In this section, we outline the analysis approach for both $\NAPD$ and $\NAPDSC$ and present their primary convergence results. To start, similar to several earlier studies involving gradient tracking methods~\cite{pu2020push,qu2017harnessing,qu2019accelerated}, we decompose the variables $\XX_k, \YY_k, \ZZ_k, \GG_k$ as their ``average parts" and the consensus errors. To this end, we introduce the notation $\UU_k = \VV_k\inv\XX_k$ and
	\eq{
		&\xa_k = \frac{1}{n}\one\tp\XX_k,\
		\ya_k = \frac{1}{n}\one\tp\YY_k,\
		\za_k = \frac{1}{n}\one\tp\ZZ_k,\
		\ga_k = \frac{1}{n}\one\tp\GG_k.
	}
	Then, we have the decomposition as follows:
	\begin{equation*}
		\begin{array}{ccccc}
			\XX_k &=& \Con\XX_k &+& \pp\xa_k,  \\
			\YY_k &=& \Con\YY_k &+& \pp\ya_k,  \\
			\ZZ_k &=& \Con\ZZ_k &+& \pp\za_k,  \\
			\GG_k &=& \underbrace{ \quad \Con\GG_k \quad }_{\text{consensus errors}} &+& \underbrace{\quad \pp\ga_k\quad }_{\text{``average parts"}},
		\end{array}
	\end{equation*}
	where $\Con = \II - \frac{\pp\one\tp}{n}$. Since $\lim\limits_{k\arr+\infty}\CC^{k} = {\pp\one\tp\over n}$, the mixing steps $\CC\vv_k$ and $\CC\XX_k$ indicate that $\vv_k$ and $\XX_k$ are converging $\pp$ and $\pp\xa_k$, respectively. This implies that $\UU_k = \VV_k\inv\XX_k$ is approaching $\one\xa_k$. The variable $\GG_k$ tracks the gradients using the relation derived from the column stochasticity of $\CC$:
	\eq{
		\ga_k = \frac{1}{n}\one\tp\gF{\UU_k}.
	}
	As $\UU_k$ approaches $\one\xa_k$, $\ga_k$ becomes close to $\frac{1}{n}\one\tp\gF{\one\xa_k} = \gf\pr{\xa_k} $, which is the gradient of the objective function at $\xa_k$. When $\ga_k = \gf\pr{\xa_k}$, the update rule~\eqref{eq:update1} with $\beta=0$ simplifies to~\eqref{eq:simplifiedAGM}. This reveals that the difference between $\NAPD$ and the centralized method~\eqref{eq:simplifiedAGM} can be controlled by comparing $\ga_k$ with $\gf\pr{\xa_k}$, a difference further bounded by $\mt{\UU_k - \one\xa_k}$ due to the $L$-smoothness of the objective functions. 
	\begin{figure}[!ht]
		\begin{center}
			\includegraphics[width=\textwidth]{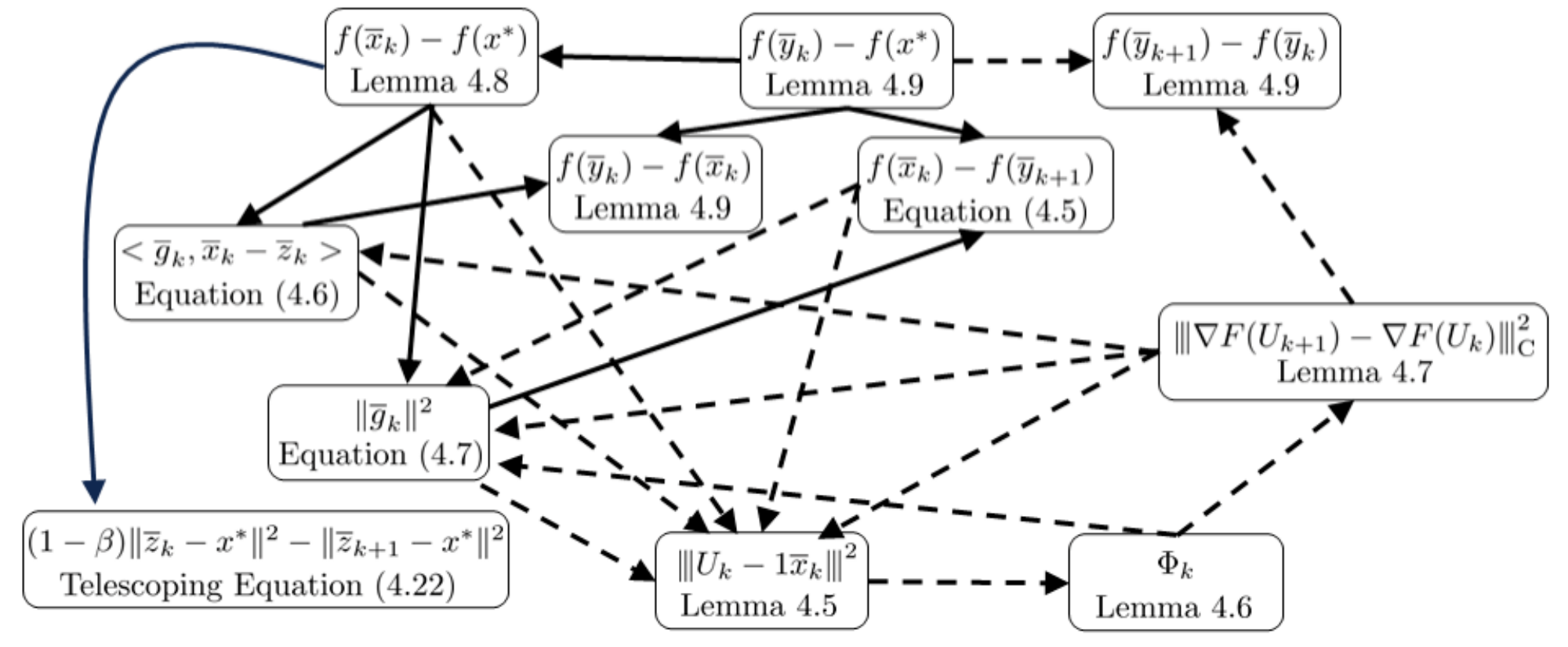}    
		\end{center}
		\caption{Relations among key quantities in the analysis}
		\label{fig:chainofinequality}
	\end{figure}
	The key lemmas in our analysis are summarized in Fig.~\ref{fig:chainofinequality}, where each term is either bounded by (or decomposed into) the weighted sum of its ``out-neighbors" in the figure or canceled out, using the lemma/equation in the same text-box. For example, by Equation~\eqref{eq:gaxazak}, the term $\jr{\ga_k, \xa_k - \za_k}$ can be bounded by weighted sum of $f(\ya_k) - f(\xa_k)$ and $\mt{\UU_k - \one\xa_k}^2$. The solid lines refer to the relations that also occur in the analysis of centralized methods, while the dashed lines are the new relations caused by the consensus errors.
	
	The main difficulties lie in the following two aspects: (1) the consensus error $\mt{\UU_k - \one\xa_k}$ has a rather complicated form than the counterpart of undirected graphs. To address this, we design an involved Lyapunov function $\Phi_k$ to bound it in \S\ref{sec:Lyapunov1}, which reduces this problem into bounding $\Phi_k$. (2) As shown in Fig.~\ref{fig:chainofinequality}, $\Phi_k$ is bounded by the gradient difference and $\nm{\ga_k}^2$, which are further bounded by the function value differences. Eventually, all the terms in Fig.~\ref{fig:chainofinequality} are ``influenced". This also causes much trouble in the analysis for the ``average" part. To overcome it, we do a linear coupling of all the inequalities that occurred in Fig.~\ref{fig:chainofinequality} with a series of dedicatedly designed weights so that the resulting inequality has only $f(\ya_K) - f(x^*)$ and some Lyapunov functions which contain consensus errors on the left-hand-side, and only the initial values occur on the right-hand-side. We remark that the linear coupling process has unified forms for strongly-convex and non-strongly convex cases.  This gives~\cref{lem:sgltogether}, which leads to~\cref{thm:smooth}.
	
	\begin{theorem}\label{thm:smooth}
		Under Assumptions~\ref{assp:C} and~\ref{assp:L}.
		Let $\tratC,~\vmi,~\gap,~\Con$ be defined in~\cref{lem:mixnm} and~\cref{lem:vkconvg1}.
		Let
		\eq{
			\Lyn =& 2\tratC^2\vmi^2\prB{(1 + \frac{1}{4\gap^2})\mC{\Con\XX_0}^2 + \frac{29\eta^2}{4\gap^4}\mC{\Con\GG_0}^2} \\
			&  + 2\tratC^2\vmi^2\nC{\vv_0 - \pp}^2 \prb{1 + \frac{1}{2\gap^2} }\nm{\xa_0}^2.}
		For non-strongly convex objective functions ($\mu = 0$), if $\pa \in (0, \frac{1}{4}]$,  $w_1 \in (0, \frac{\pa}{5}] $,  $\alp_k = 1+w_1k$, $\tau_k = \frac{\pa}{\alp_k}$,
		$\eta \leq \min\big\{\frac{\sqrt{\pa}\gap^4}{328\tratC\vmi L},  \frac{\pa n\gap^6}{2540 \tratC^2\vmi^2 \nC{\vv_0 - \pp}^2 L } \big\} $, $\bet = 0$, then the output of~\cref{alg:smooth&strongly-convex} satisfies
		\eql{\label{eq:smoothrate}}{
			&\frac{1}{n}\sum_{i\in \calN} f(\vv_{K, i}\inv\yy_{K, i}) - f(\xx^*)    + \frac{L}{n}\mt{\VV_K\inv\YY_K - \one\ya_K}^2 \\
			\leq& \frac{2}{3(1+\wa (K-1))^2} \cdot
			\prB{\frac{\nm{\xa_0 - \xx^*}^2  }{\eta} + \frac{24 L}{\pa n\gap^3} \Lyn + \frac{2}{\pa} \pr{f(\xa_0) - f(\xx^*)}}.
		}
		For strongly convex objective functions ($\mu > 0$),
		if $\pa \in (0, \frac{1}{4}]$,
		$\alp \geq 1$,
		$\tau = \frac{\pa}{\alp}$,
		$\eta \leq \min\big\{\frac{\sqrt{\pa}\gap^3}{342\tratC\vmi L }, \frac{\pa n\gap^4}{2772 \tratC^2\vmi^2 \nC{\vv_0 - \pp}^2 L }  \big\}$,
		$\beta \in (0, \min\drn{\frac{\eta\alp\mu}{2}, \frac{\tau}{2},{\theta\over 4},{\theta^2\over 8\tau}}]$, then the output of~\cref{alg:smooth&strongly-convex} satisfies
		\eql{\label{eq:stronglyconvexrate}}{
			&\frac{1}{n}\sum_{i\in \calN}   {f(\vv_{K, i}\inv\yy_{K, i}) - f(\xx^*)  }    + \frac{L}{n}\mt{\VV_K\inv\YY_K - \one\ya_K}^2 \\
			\leq& \pr{1 - \bet}^{K-1} \cdot \frac{2}{3\alp^2} \prB{ \frac{\nm{\xa_0 - \xx^*}^2  }{\eta} + \frac{24L\alp^2}{\pa           n\gap} \Lyp +  \frac{2}{\pa} \pr{f(\xa_0) - f(\xx^*)}}.}
	\end{theorem}
	
	\begin{remark}
		Relation \eqref{eq:smoothrate} gives the $O(1/k^2)$ rate of APD directly. To highlight the convergence rate of APD-SC, we omit other parameters except $\mu, L$ in the notation $O(\cdot)$ here. Then, by setting $\alp = O(\sqrt{\frac{L}{\mu}})$, $\eta = O(1/L)$, and $\beta = \min\dr{\frac{\eta\alp\mu}{2}, \frac{\tau}{2}} = O(\sqrt{\frac{\mu}{L}})$. Thus, \eqref{eq:stronglyconvexrate} gives the convergence rate of $O((1 - C\sqrt{\frac{\mu}{L}})^k)$.
		We also remark that in practice, the stepsizes and parameters can be chosen in much larger ranges than the theoretical bounds. Our procedure to choose the parameters and stepsizes in practice is described in Section~\ref{sec:numericlexperiment}.
	\end{remark}
	
	\section{Unified Convergence Analysis of $\NAPD$ and $\NAPDSC$}\label{sec:smooth}
	This section presents the convergence of~\cref{alg:smooth&strongly-convex}. We use the convexity and smoothness with inexact gradients (\cref{lem:addUx1}) to characterize the evolution of ``average parts" in \S\ref{eq:evolofmean}. All additional errors compared with the central accelerated gradient descent~\eqref{eq:simplifiedAGM} are bounded by $\mt{\UU_k - \one\xa_k}^2$. Then, we construct Lyapunov functions to bound $\mt{\UU_k - \one\xa_k}^2$ in~\S\ref{sec:Lyapunov1}. The Lyapunov functions decay exponentially with additional errors, which can be eliminated by the descent of the ``average parts" with proper parameters. The ``linear coupling" in~\S\ref{sec:linear coupling1} has two layers of meanings. Firstly, our analysis can be viewed as an inexact version of AGM~\cite{allen2017linear} in Euclidean space. Remark that AGM is an optimal first-order method that conducts ``linear coupling" of gradient descent and mirror descent. Secondly, we do ``linear coupling" for the inequalities characterizing the evolution of the ``average parts" and the Lyapunov functions to prove the convergence rate.
	
	\subsection{Evolution of the ``average part" }\label{eq:evolofmean}
	As $\CC$ is a column stochastic matrix,  multiplying $\frac{\one\tp}{n}$ on both sides of \eqref{eq:Yupda1}-\eqref{eq:Xupda1}, we have, for $0\leq k< K$,
	\subeqnum{
		\ya_{k+1} &= \xa_k - \eta\ga_k, \label{eq:yaupd1} \\
		\za_{k+1} &= \pr{1 - \bet}\za_k + \bet\xa_k - \alp_k\eta\ga_k, \label{eq:zaupd1}  \\
		\xa_{k+1} &= \pr{1 - \tau_{k+1}}\ya_{k+1} + \tau_{k+1}\za_{k+1}. \label{eq:xaupd1} 
	}
	
	It follows from \eqref{eq:zaupd1} that
	\eql{\label{eq:za-xx*1}}{
		&2\alp_k\eta\jr{\ga_k, \za_{k+1} - \xx^*} \\ =& 2\pr{1 - \bet}\jr{\za_k - \za_{k+1}, \za_{k+1} - \xx^*} + 2\bet\jr{\xa_k - \za_{k+1}, \za_{k+1} - \xx^*} \\
		\leq& \pr{1 - \bet}\nm{\za_k - \xx^*}^2 - \nm{\za_{k+1} - \xx^*}^2 + \bet\nm{\xa_k - \xx^*}^2.
	}
	Combining the fact $\xa_0 = \ya_0 = \za_0$ and \eqref{eq:xaupd1}, there holds
	\eql{\label{eq:taukbir1}}{
		\xa_k - \za_k = \frac{1 - \tau_k}{\tau_k } \pr{\ya_k - \xa_k},\ \forall k\geq 0.
	}
	
	Lemma~\ref{lem:supportinglemma2} provides some useful inequalities for the theoretical analysis, and its proof is mainly based on~\cref{lem:addUx1} in the appendix.
	\begin{lemma}\label{lem:supportinglemma2}
		Under Assumption~\ref{assp:L}, for any $0 \leq k < K$,
		\begin{align}
			f\pr{\xa_k} - f\pr{\xx^*} \leq & \jr{\ga_k, \xa_k - \xx^*} - \frac{\mu L}{2(\mu+L)}\nm{\xa_k - \xx^*}^2 + \frac{L}{n}\mt{\UU_k - \one\xa_k}^2, \label{eq:fxa-xx*1}  \\
			f\pr{\xa_{k}}-f\pr{\ya_{k+1}}  &\leq  \eta\nm{\ga_k}^2 + \frac{L}{2n}\mt{\UU_k - \one\xa_k}^2. \label{eq:fxa-yak+12}\\
			\jr{\ga_k, \xa_k - \za_k} \leq & \frac{1 - \tau_k}{\tau_k }\prb{f\pr{\ya_k} - f\pr{\xa_k} + \frac{L}{2n}\mt{\UU_k - \one\xa_k}^2}, \label{eq:gaxazak}  \\
			{1\over2}\eta(1-\eta L)\nm{\ga_k}^2 &\leq f\pr{\xa_k} - f\pr{\ya_{k+1}} + \frac{\eta L^2}{2n}\mt{\UU_k - \one\xa_k}^2. \label{eq:fxak-yak+1}
		\end{align}
	\end{lemma}
	\begin{proof}
		The inequality~\eqref{eq:fxa-xx*1} follows by applying~\cref{lem:addUx1} with $\aa = \xa_k$, $\bb = \xx^*$ and the fact that $-\mu\mt{\UU_k - \one\xx^*}^2 \leq -\frac{n\mu L}{\mu+L}\nm{\xa_k - \xx^*}^2 + L\mt{\UU_k - \one\xa_k}^2 $. The inequality~\eqref{eq:fxa-yak+12} follows by setting $\aa = \xa_k$, $\bb = \ya_{k+1}$ in~\cref{lem:addUx1}.
		
		By setting $\aa = \xa_k$, $\bb = \ya_k$ in~\cref{lem:addUx1}, we obtain.
		\eql{\label{eq:fxa-ya1}}{
			f\pr{\xa_k} - f\pr{\ya_k} &\leq \jr{\ga_k, \xa_k - \ya_k} + \frac{L}{2n}\mt{\UU_k - \one\xa_k}^2.
		}
		Combining \eqref{eq:fxa-ya1} with \eqref{eq:taukbir1} gives \eqref{eq:gaxazak}. For~\eqref{eq:fxak-yak+1}, we have
		\begin{align*}
			{\eta\pr{1 - \eta L}}\nm{\ga_k}^2=&\eta\nm{\ga_k}^2-L\nm{\xa_k - \ya_{k+1}}^2\\
			\comleq{\eqref{eq:jrgagf2}} &  \frac{\eta L^2}{n}\mt{\UU_k - \one\xa_k}^2+ 2\jr{\xa_k - \ya_{k+1} , \gf\pr{\xa_k}}- { L}\nm{\xa_k - \ya_{k+1} }^2\\
			\leq~&\frac{\eta L^2}{n}\mt{\UU_k - \one\xa_k}^2+ 2 \pr{f\pr{\xa_k} - f\pr{\ya_{k+1}}}.
		\end{align*}
		The last inequality holds because of the $L$-smoothness of $f$.  
	\end{proof}
	
	\subsection{Construction of Lyapunov functions}\label{sec:Lyapunov1}
	We define several functions and present a series of lemmas. The following figure shows the flow of the lemmas in \S \ref{sec:Lyapunov1} and \S\ref{sec:linear coupling1}. Lemma~\ref{lem:Lyprecc1} introduces upper bounds for $\mt{\UU_k - \one\xa_k}^2$ and shows their recursive relation. Lemmas~\ref{lem:unifysummtConU} and~\ref{lem:gFdiff1} together provide upper bounds for the additional term in Lemma~\ref{lem:Lyprecc1}. 
	
	\begin{figure}[!ht]
		\includegraphics[width=\textwidth]{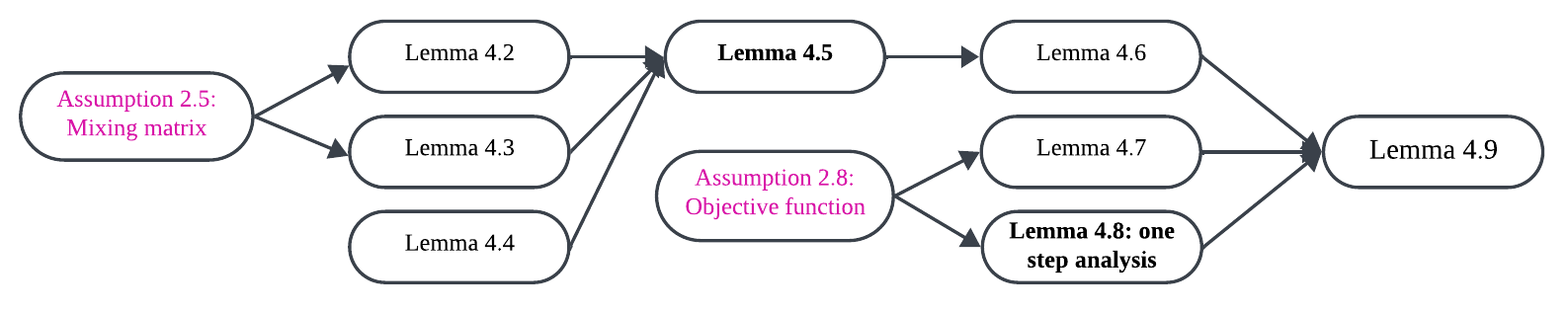}
		\vspace{-10pt}
		\caption{Flow of the lemmas in \S \ref{sec:Lyapunov1} and \S\ref{sec:linear coupling1}}
	\end{figure}
	
	\begin{lemma}\label{lem:U-xa1}
		Under Assumption~\ref{assp:C}, for any $0 \leq k < K$, we have
		\eq{
			\mt{\UU_k - \one\xa_k}^2 \leq 2\tratC^2\vmi^2\pr{\mC{\Con\XX_k}^2 + \pr{1 - \gap}^{2k}\nC{\vv_0 - \pp}^2\nt{\xa_k}^2}.
		}
	\end{lemma}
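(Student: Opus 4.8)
The plan is to split $\UU_k - \one\xa_k$ into a consensus-error part and a part measuring how far the weight vector $\vv_k$ is from the Perron vector $\pp$, and then bound each piece using the submultiplicativity and norm-equivalence facts already established. First I would invoke $\Con = \II - \frac{\pp\one\tp}{n}$ to write $\XX_k = \Con\XX_k + \pp\xa_k$. Since $\UU_k = \VV_k\inv\XX_k$, this gives
\[
\UU_k - \one\xa_k = \VV_k\inv\Con\XX_k + \pr{\VV_k\inv\pp - \one}\xa_k.
\]
The key algebraic observation is that $\VV_k = \Diag{\vv_k}$ forces $\VV_k\inv\vv_k = \one$, so $\VV_k\inv\pp - \one = -\VV_k\inv\pr{\vv_k - \pp}$; hence the second summand is the outer product $-\VV_k\inv\pr{\vv_k - \pp}\xa_k$, whose magnitude is governed by the distance from $\vv_k$ to $\pp$.

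Next I would apply Lemma~\ref{lem:Hilbert} with $\la = 2$, using the Frobenius norm $\mt{\cdot}$, to obtain
\[
\mt{\UU_k - \one\xa_k}^2 \leq 2\mt{\VV_k\inv\Con\XX_k}^2 + 2\mt{\VV_k\inv\pr{\vv_k - \pp}\xa_k}^2.
\]
For the first summand, Lemma~\ref{eq: basic calculus of mm} yields $\mt{\VV_k\inv\Con\XX_k} \leq \nm{\VV_k\inv}\mt{\Con\XX_k}$; then $\nm{\VV_k\inv} \leq \vmi$ from Lemma~\ref{lem:vkconvg1} together with the norm equivalence $\mt{\Con\XX_k} \leq \tratC\mC{\Con\XX_k}$ from~\eqref{eq:tratC1} bounds it by $\vmi^2\tratC^2\mC{\Con\XX_k}^2$ after squaring.

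For the second summand I would treat it as an outer product and apply the second inequality of Lemma~\ref{eq: basic calculus of mm}, namely $\mt{\aa\bb\tp} \leq \nm{\aa}\nm{\bb}$, with $\aa = \VV_k\inv\pr{\vv_k - \pp}$ and $\bb\tp = \xa_k$, giving $\mt{\VV_k\inv\pr{\vv_k - \pp}\xa_k} \leq \nm{\VV_k\inv\pr{\vv_k - \pp}}\,\nt{\xa_k}$. I would then chain $\nm{\VV_k\inv\pr{\vv_k - \pp}} \leq \vmi\nm{\vv_k - \pp} \leq \vmi\tratC\nC{\vv_k - \pp}$ via $\nm{\VV_k\inv}\leq\vmi$ and~\eqref{eq:tratC1}, and finally use the geometric decay $\nC{\vv_k - \pp} \leq \pr{1 - \gap}^k\nC{\vv_0 - \pp}$ from Lemma~\ref{lem:vkconvg1}; squaring produces the factor $\pr{1 - \gap}^{2k}\nC{\vv_0 - \pp}^2\nt{\xa_k}^2$. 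Substituting the two bounds and pulling out the common constant $2\tratC^2\vmi^2$ yields the claim. The only genuinely non-routine point is spotting the identity $\VV_k\inv\pp - \one = -\VV_k\inv\pr{\vv_k - \pp}$, which recasts the $\vv_k$-deviation as an outer product that the second part of Lemma~\ref{eq: basic calculus of mm} can handle; everything afterward is bookkeeping with the norm-equivalence constants $\vmi$ and $\tratC$ and the contraction rate $1 - \gap$.
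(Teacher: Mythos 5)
Your proof is correct and follows essentially the same route as the paper: the identical decomposition $\UU_k - \one\xa_k = \VV_k\inv\Con\XX_k + \VV_k\inv\pr{\pp - \vv_k}\xa_k$, the split with $\la = 2$, the bounds $\nm{\VV_k\inv}\leq\vmi$ and the outer-product inequality, and the geometric decay of $\nC{\vv_k - \pp}$. The only (immaterial) difference is that you apply the norm-equivalence factor $\tratC$ term by term in the Frobenius norm, whereas the paper converts $\mt{\cdot}^2 \leq \tratC^2\mC{\cdot}^2$ once up front and works in the $\nC{\cdot}$-norm; both give the same constant $2\tratC^2\vmi^2$.
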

	
	\begin{proof}
		The decomposition 
		$
		\UU_k - \one\xa_k  = \VV_k\inv\Con\XX_k + \VV_k\inv\pr{\pp - \vv_k}\xa_k
		$ gives
		\eq{
			&\mt{\UU_k - \one\xa_k}^2 \leq 2(\mt{\VV_k\inv\Con\XX_k}^2 + \mt{\VV_k\inv\pr{\pp - \vv_k}\xa_k}^2) \\
			\leq& 2\vmi^2\prb{\mt{\Con\XX_k}^2 + \nt{\vv_k - \pp}^2\nt{\xa_k}^2} \leq 2\tratC^2\vmi^2\prb{\mC{\Con\XX_k}^2 + \nC{\vv_k - \pp}^2\nt{\xa_k}^2}  \\
			\comleq{\eqref{eq:vLyap1}}& 2\tratC^2\vmi^2\prb{\mC{\Con\XX_k}^2 + \pr{1 - \gap}^{2k}\nC{\vv_0 - \pp}^2\nt{\xa_k}^2}.
		}
		The lemma is proved.
	\end{proof}
	
	Lemmas~\ref{lem:fa1} and~\ref{lem:fa2} bound $\mC{\Con\XX_k}$ and $\pr{1 - \gap}^{2k}\nm{\xa_k}^2$, respectively.
	
	\begin{lemma}\label{lem:fa1}
		Under Assumption~\ref{assp:C} and the condition ${\bet^2\tau_0^2} \leq \frac{\gap^4}{16} $. Let $\pc = 4(1+\pa)^2 + 16\pa^2 $ with $\pa$ defined in Algorithm~\ref{alg:smooth&strongly-convex} and the Lyapunov function $\Ly_{k}$ be defined as
		$
		\Ly_k := \mC{\Con\XX_k}^2 + \frac{4\tau_k^2}{\gap^2}\mC{\Con\ZZ_k}^2 + \frac{\pc\eta^2}{\gap^4}\mC{\Con\GG_k}^2.
		$
		Then, we have for any $0 \leq k < K - 1$,
		\eq{
			\Ly_{k+1} \leq \prb{1 - \frac{\gap}{2}}\Ly_k + \frac{\pc\eta^2}{\gap^5}\mC{\gF{\UU_{k+1}} - \gF{\UU_k}}^2.
		}
	\end{lemma}
	
	\begin{proof}
		For $0 \leq k < K-1$, multiplying $\Con$ on both sides of (\ref{eq:update1}) and using the equality $\Con\CC = \CC - \frac{\pp\one\tp}{n} = \CM\Con$,  we obtain
		\subeqnum{
			\Con\YY_{k+1} =& \CM\pr{\Con\XX_k - \eta\Con\GG_k}, \label{eq:conY1} \\
			\Con\ZZ_{k+1} =& \CM\pr{\pr{1 - \bet}\Con\ZZ_k + \bet\Con\XX_k - \alp_k\eta\Con\GG_k}, \label{eq:conZ1} \\
			\Con\XX_{k+1} =& \CM\pr{(1-(1-\beta)\tau_{k+1})\Con\XX_k+(1-\beta)\tau_{k+1}\Con\ZZ_k}\label{eq:conX1} \\
			&  -\CM(1+(\alpha_k-1)\tau_{k+1})\eta\Con\GG_k,
			\nonumber\\
			\Con\GG_{k+1} =& \CM\Con\GG_k + \Con\pr{\gF{\UU_{k+1}} - \gF{\UU_k}}, \label{eq:conG1}
		}
		Giving~\eqref{eq:conG1}, by using~\eqref{eq:nnmprop1} and setting $\la = \frac{1}{1 - \gap}$ in Lemma~\ref{lem:Hilbert}, we have
		\eql{\label{eq:GLayp1}}{
			\mC{\Con\GG_{k+1}}^2 \leq \pr{1 - \gap}\mC{\Con\GG_k}^2 + \frac{1}{\gap}\mC{\gF{\UU_{k+1}} - \gF{\UU_k}}^2. 
		}
		It is followed by similar arguments that
		\begin{align}
			\mC{\Con\ZZ_{k+1}}^2
			\leq & \pr{1 - \gap}\mC{\Con\ZZ_k}^2 + \frac{2\bet^2}{\gap}\mC{\Con\XX_k}^2 +   \frac{2\alp_k^2\eta^2}{\gap}\mC{\Con\GG_k}^2 , \label{eq:ZLayp1}\\
			\mC{\Con\XX_{k+1}}^2 \leq & \pr{1 - \gap}\mC{\Con\XX_k}^2 + \frac{2\tau_{k}^2}{\gap}\mC{\Con\ZZ_{k}}^2 \\
			&\  + \frac{{2(1+\pa)^2}\eta^2}{\gap}\mC{\Con\GG_k}^2 , \notag
		\end{align}
		where the last inequality holds because of the facts $\beta\in[0,1)$, $\alpha_k\geq 1$, $\tau_{k+1} \leq \tau_k$ and $\alp_k\tau_k =c_+$. From the definition of $\Ly_k $, we have
		\eq{
			\Ly_{k+1} \leq& \pr{1 - \gap}\mC{\Con\XX_k}^2  + \frac{2\tau_{k}^2}{\gap}\mC{\Con\ZZ_{k}}^2 + \frac{{2(1+\pa)^2}\eta^2}{\gap}\mC{\Con\GG_k}^2\\
			& + \frac{4\tau_{k}^2}{\gap^2}\prb{\pr{1 - \gap}\mC{\Con\ZZ_k}^2 + \frac{2\bet^2}{\gap}\mC{\Con\XX_k}^2 +   \frac{2\alp_k^2\eta^2}{\gap}\mC{\Con\GG_k}^2 } \\
			& + \frac{\pc\eta^2}{\gap^4}\prb{\pr{1 - \gap}\mC{\Con\GG_k}^2 + \frac{1}{\gap}\mC{\gF{\UU_{k+1}} - \gF{\UU_k}}^2 } \\
			\leq& \prb{1 - \frac{\gap}{2}}\Ly_k + \frac{\pc\eta^2}{\gap^5}\mC{\gF{\UU_{k+1}} - \gF{\UU_k}}^2,
		}
		where the last inequality holds because of $\tau_{k+1} \leq \tau_k $, $\alp_k\tau_k = c_+\leq 1$, and $\bet^2\tau_0^2 \leq \frac{\gap^4}{16} $.
	\end{proof}
	
	\begin{lemma}\label{lem:fa2}
		Under the condition ${\bet^2\tau_0^2} \leq \frac{\gap^4}{64}$. Let $\pb = 4(1+\pa)^2 + 32\pa^2$ with $\pa$ defined in Algorithm~\ref{alg:smooth&strongly-convex}. For any $0 \leq k < K$, we have
		\eq{
			\Lya_{k+1} \leq \prb{1 - \frac{\gap}{2}} \Lya_k + \frac{\pb}{\gap^3}\eta^2\nm{\ga_k}^2,
		}
		where the Lyapunov function
		$
		\Lya_k := \pr{1 - \gap}^{2k}\pr{\nm{\xa_k}^2 + \frac{8}{\gap^2}\tau_k^2\nm{\za_k}^2    }.
		$
	\end{lemma}
	
	\begin{proof}
		Based on Algorithm~\ref{alg:smooth&strongly-convex}, we expand $\xa_{k+1}$ as 
		\eql{\label{eq:xadiff1}}{
			\xa_{k+1} = \pr{1 - \pr{1 - \bet}\tau_{k+1}}\xa_k + \pr{1 - \bet}\tau_{k+1}\za_k - \pr{1 +(\alp_k-1) \tau_{k+1}}\eta\ga_k.
		}
		By applying Lemma~\ref{lem:Hilbert} with $\la = 1 + \gap$ to~\eqref{eq:xadiff1}, we have
		\eq{
			\nm{\xa_{k+1}}^2 \leq& \pr{1 + \gap}\pr{1 - \pr{1 - \bet}\tau_{k+1}}^2\nm{\xa_k}^2  \\
			& + \frac{1+\gap}{\gap}\nm{\pr{1 - \bet}\tau_{k+1}\za_k - \pr{1 +(\alp_k-1) \tau_{k+1}}\eta\ga_k}^2  \\
			\leq& \pr{1 + \gap}\nm{\xa_k}^2 + \frac{4}{\gap}\tau_{k}^2\nm{\za_k}^2 + \frac{4(1+\pa)^2}{\gap }\eta^2\nm{\ga_k}^2,
		}
		where the second inequality comes from $\tau_k \in (0,1)$, $1+(\alp_k-1) \tau_{k+1}  \leq 1 + \alp_k\tau_k = 1 + \pa$, and $\gap < 1$. Similarly, applying Lemma~\ref{lem:Hilbert} with $\la = 1 + \gap$ to~\eqref{eq:zaupd1} gives us
		\eq{
			\tau_{k+1}^2\nm{\za_{k+1}}^2 \leq& \pr{1 + \gap}\tau_{k+1}^2\pr{1 - \bet}^2\nm{\za_k}^2 + \frac{4\bet^2\tau_{k+1}^2}{\gap}\nm{\xa_k}^2 + \frac{4\tau_{k+1}^2\alp_{k}^2\eta^2}{\gap}\nm{\ga_k}^2  \\
			\leq& \pr{1 + \gap}\tau_{k}^2\nm{\za_k}^2 + \frac{4\bet^2\tau_0^2}{\gap}\nm{\xa_k}^2 + \frac{4\pa^2}{\gap}  \eta^2\nm{\ga_k}^2.
		}
		Thus, combining the above equations and the condition ${\bet^2\tau_0^2}  \leq \frac{\gap^4}{64} $, we obtain
		\eq{
			&\nm{\xa_{k+1}}^2 + \frac{8}{\gap^2}\tau_{k+1}^2\nm{\za_{k+1}}^2 \\
			\leq& \pr{1 + \gap}\nm{\xa_k}^2 + \frac{4}{\gap}\tau_{k}^2\nm{\za_k}^2 + \frac{4(1+\pa)^2}{\gap }\eta^2\nm{\ga_k}^2 \\
			& + \frac{8\pr{1 + \gap}}{\gap^2}\tau_{k}^2\nm{\za_k}^2 + \frac{32\bet^2\tau_0^2}{\gap^3  }\nm{\xa_k}^2 + \frac{32\pa^2}{\gap^3}  \eta^2\nm{\ga_k}^2 \\
			\leq& \prb{1 + \frac{3\gap}{2}}\prb{\nm{\xa_{k}}^2 + \frac{8}{\gap^2}\tau_{k}^2\nm{\za_{k}}^2} + \frac{4(1+\pa)^2 + 32\pa^2}{\gap^3}  \eta^2\nm{\ga_k}^2.
		}
		Then, by combining the inequality with the definition of $\Lya_k$ and the fact that $\pr{1 - \gap}^2\pr{1 + \frac{3\gap}{2}} \leq 1 - \frac{\gap}{2}$, we have
		\eq{
			\Lya_{k+1} =& \pr{1 - \gap}^{2\pr{k+1}}\prb{\nm{\xa_{k+1}}^2 + \frac{8}{\gap^2}\tau_{k+1}^2\nm{\za_{k+1}}^2}  \\
			\leq&  \prb{1 - \frac{\gap}{2}}\Lya_k
			+ \frac{4(1+\pa)^2 + 32\pa^2}{\gap^3}\eta^2\nm{\ga_k}^2.  
		}
		The lemma is proved.
	\end{proof}
	
	Then, we construct a Lyapunov function to bound $\mt{\UU_k - \one\xa_k}^2$.
	\begin{lemma}\label{lem:Lyprecc1}
		Under Assumption~\ref{assp:C} and the conditions  in Lemmas~\ref{lem:fa1} and~\ref{lem:fa2}, we define
		$
		\pg = 2\tratC^2\vmi^2\nC{\vv_0 - \pp}^2\pb,
		$
		and
		\eq{
			\Lyp_k = & 2\tratC^2\vmi^2\prb{\Ly_k + \nC{\vv_0 - \pp}^2\Lya_k},\\
			R_k = & \frac{2\tratC^2\vmi^2\pc\eta^2}{\gap^5}\mC{\gF{\UU_{k+1}} - \gF{\UU_k}}^2
			+ \frac{\pg\eta^2}{\gap^3} \nm{\ga_k}^2.
		}
		Then, $\mt{\UU_k - \one\xa_k}^2 \leq \Lyp_k $ and
		\eql{\label{eq:Lyprecc1}}{
			\Lyp_{k+1} \leq& \prb{1 - \frac{\gap}{2}}\Lyp_k + R_k.
		}
	\end{lemma}
	
	\begin{proof}
		Note that $\mC{\Con\XX_k} \leq \Ly_k$ and $\pr{1 - \gap}^{2k}\nm{\xa_k}^2 \leq \Lya_k$ from~Lemmas~\ref{lem:fa1} and~\ref{lem:fa2}. Then by~\cref{lem:U-xa1}, we have $\mt{\UU_k - \one\xa_k}^2 \leq \Lyp_k $ for any $k\geq 0$. The recurrence~\eqref{eq:Lyprecc1} follows immediately by combining~Lemmas~\ref{lem:fa1} and~\ref{lem:fa2}.
	\end{proof}
	
	\begin{lemma}\label{lem:unifysummtConU}
		Let $R_k$ be defined in Lemma~\ref{lem:Lyprecc1}. To simplify the unified steps in the proofs of both non-strongly convex and strongly convex cases, we define $\pk$ separately for each case. Let    \begin{equation}\label{eq:mtConUc1}
			\pk =
			\left\{
			\begin{split}
				&\frac{3 + 12\wa + 26\wa^2}{\pa\gap^2},\ \text{for the non-strongly convex case ($\mu = 0 $),} \\
				&\frac{4}{\pa},  \ \ \ \text{for the strongly convex case ($\mu > 0 $).}
			\end{split}
			\right.
		\end{equation}
		Then, under the condition $\bet \leq \frac{\gap}{4}$, we have
		\eql{\label{eq:unifysummtConU}}{
			\sum_{k=0}^{K-1}\frac{\alp_k}{\tau_k}\pr{1 - \bet}^{-k}\Lyp_k \leq \frac{\pk\alp_0^2}{\gap} \Lyp_0 + \frac{2\pk}{\gap}\sum_{j=0}^{K-2} \alp_j^2 \pr{1 - \bet}^{-j} R_j.
		}
	\end{lemma}
	
	\begin{proof}
		We consider the non-strongly convex objective case first. Lemmas~\ref{lem:Lyprecc1} and~\ref{lem:k^2expd1} together with the definition of $\alp_k$ in the non-strongly convex case give
		\eql{\label{eq:sumalp^2ConU1}}{
			\sum_{k=0}^{K-1}\alp_k^2\Lyp_k
			\leq& \sum_{k=0}^{K-1} \alp_k^2\prb{1 - \frac{\gap}{2}}^k\Lyp_0 + \sum_{k=1}^{K-1}\alp_k^2\sum_{j=1}^{k}\prb{1 - \frac{\gap}{2}}^{k-j}R_{j-1} \\
			\leq& \sum_{k=0}^{K-1} \alp_k^2\prb{1 - \frac{\gap}{2}}^k\Lyp_0 + \sum_{j=1}^{K-1}\sum_{k=j}^{\infty}\alp_k^2\prb{1 - \frac{\gap}{2}}^{k-j}R_{j-1} \\
			\leq& \frac{(3 + 12\wa + 26\wa^2)\alp_0^2}{\gap^3} \Lyp_0 + \frac{2(3 + 12\wa + 26\wa^2)}{\gap^3}\sum_{j=0}^{K-2} \alp_j^2 R_j.
		}
		Then, the result for non-strongly convex cases follows by $\bet = 0$ and $\frac{\alp_k}{\tau_k} = \frac{\alp_k^2}{\pa}$.
		
		For the strongly convex case, Lemma~\ref{lem:Lyprecc1} and the condition $\bet \leq \frac{\gap}{4}  $ show
		\eq{
			&\sum_{k=0}^{K-1}\pr{1 - \bet}^{-k}\Lyp_k
			\leq \sum_{k=0}^{K-1} \pr{1 - \bet}^{-k}\prB{\prb{1 - \frac{\gap}{2}}^k\Lyp_0 + \sum_{j=1}^{k}\prb{1 - \frac{\gap}{2}}^{k-j}R_{j-1}} \\
			\leq& \sum_{k=0}^{K-1} \prb{1 - \frac{\gap}{4}}^k\Lyp_0 + \frac{1}{1 - \bet}\sum_{j=1}^{K-1}\sum_{k=j}^{\infty}\prb{1 - \frac{\gap}{4}}^{k-j}\pr{1 - \bet}^{-j+1}R_{j-1} \\
			\leq& \frac{4}{\gap}\Lyp_0 + \frac{4}{3}\cdot \frac{4}{\gap } \sum_{j=0}^{K-2}\pr{1 - \bet}^{-j} R_j.
		}
		The result for strongly convex cases follows by $R_j \geq 0 $, $\frac{4}{3} < 2$ and $\frac{\alp_k}{\tau_k} \equiv \frac{\alp^2}{\pa}$.
	\end{proof}
	
	In addition, we provide an upper bound for $\mC{\gF{\UU_{k+1}} - \gF{\UU_k}}^2$, which can be bounded by $\mt{\gF{\one\xa_{j+1}} - \gF{\one\xa_j}}^2$ added with consensus errors. We remark that the term $\mt{\gF{\one\xa_{j+1}} - \gF{\one\xa_j}}^2$  is the most troublesome term because the difference between $\xa_{k+1} - \xa_k$ contains some history gradients and could be large. In~\cite{qu2019accelerated}, the authors bound this term by $L^2 n \mt{\xa_{j+1} - \xa_j}^2$. Then, they use diminishing stepsizes to guarantee convergence and obtain suboptimal convergence rates. Similar to a recent work~\cite{li2021accelerated}, we bound this term by Bregman divergence essentially.  
	\begin{lemma}\label{lem:gFdiff1}
		Under Assumption~\ref{assp:L} and $2\eta L \leq 1 $, $\pa\in (0, 1/4]$, for any $0 \leq k < K - 1$, we have
		\eql{\label{eq:gFdiff1}}{
			&\mC{\gF{\UU_{k+1}} - \gF{\UU_k}}^2\\
			\leq &4nL\pr{f\pr{\ya_{k+2}} - f\pr{\ya_{k+1}} + \pr{1 - \bet}\tau_{k+1}\jr{\ga_k, \xa_k - \za_k  }}+4 n \eta L   \nm{\ga_k}^2   \\
			&  +4 n \eta L \nm{\ga_{k+1}}^2  
			+ 3L^2\mt{\UU_{k} - \one\xa_{k}}^2
			+ 4L^2\mt{\UU_{k+1} - \one\xa_{k+1}}^2.
		}
	\end{lemma}
	
	\begin{proof}
		By the $L$-smoothness and \eqref{eq:tratC1}, we have
		\eql{\label{eq:gFUdiffdp1}}{
			&\mC{\gF{\UU_{k+1}} - \gF{\UU_k}}^2 \leq \mt{\gF{\UU_{k+1}} - \gF{\UU_k}}^2  \\
			\leq& 2\mt{\gF{\UU_{k+1}} - \gF{\one\xa_{k+1}}}^2 + 2\mt{\gF{\one\xa_{k+1}} - \gF{\UU_k}}^2 \\
			\leq& 2L^2\mt{\UU_{k+1} - \one\xa_{k+1}}^2 + 2\mt{\gF{\one\xa_{k+1}} - \gF{\UU_k}}^2.
		}
		Now, we analyze the second term on the right-hand side of \eqref{eq:gFUdiffdp1}. Applying~\eqref{eq:SVRGimp} with $(\xx,\yy)=(\uu_{k,i},\xa_{k+1})$ yields
		\eq{
			\frac{1}{2L}\nm{\na f_i(\xa_{k+1}) - \na f_i(\uu_{k,i})}^2 \leq f_i(\xa_{k+1}) - f_i(\uu_{k,i}) - \jr{\na f_i(\uu_{k,i}), \xa_{k+1} - \uu_{k,i}}.
		}
		The $L$-smoothness of $f_i$ in Assumption~\ref{assp:L} gives
		\eq{
			f_i(\xa_{k}) \leq f_i(\uu_{k,i}) + \jr{\na f_i(\uu_{k,i}), \xa_{k} - \uu_{k,i}} + \frac{L}{2}\nm{\xa_{k} - \uu_{k,i}}^2.
		}
		Combining the above two equations, we have
		\eq{
			&\frac{1}{2L}\nm{\na f_i(\xa_{k+1}) - \na f_i(\uu_{k,i})}^2 \\
			\leq& f_i(\xa_{k+1}) - f_i(\xa_{k}) - \jr{\na f_i(\uu_{k,i}), \xa_{k+1} - \xa_{k}} + \frac{L}{2}\nm{\xa_{k} - \uu_{k,i}}^2.
		}
		Taking the average over $i=1,\dots,n$ on both sides of the above equation and using $\alp_k\tau_{k+1} \leq \pa$, we have
		\eql{\label{eq:gFdiffconxak+1UUk1}}{
			& \frac{1}{2n L}\mt{\gF{\one\xa_{k+1}} - \gF{\UU_k}}^2 \\
			\leq& f\pr{\xa_{k+1}} - f\pr{\xa_k} - \jr{\ga_k, \xa_{k+1} - \xa_k} + \frac{L}{2n}\mt{\UU_{k} - \one\xa_{k}}^2 \\
			\comleq{\eqref{eq:xadiff1}}& f\pr{\xa_{k+1}} - f\pr{\xa_k} - \pr{1 - \bet}\tau_{k+1}\jr{\ga_k, \za_k - \xa_k}
			+ (1+\pa)\eta\nm{\ga_k}^2\\
			&+ \frac{L}{2n}\mt{\UU_{k} - \one\xa_{k}}^2.
		}
		Next, we bound $f(\xa_k)$ by $f(\ya_k)$ and norms of gradients and consensus errors.
		\eql{\label{eq:fxtrafy1}}{f\pr{\xa_{k+1}}&\comleq{\eqref{eq:fxa-yak+12}}f\pr{\ya_{k+2}} + \eta\nm{\ga_{k+1}}^2 + \frac{L}{2n}\mt{\UU_{k+1} - \one\xa_{k+1}}^2, \\
			- f\pr{\xa_k}&\comleq{\eqref{eq:fxak-yak+1}} - f\pr{\ya_{k+1}} + \frac{\eta L^2}{2n}\mt{\UU_{k} - \one\xa_{k}}^2 - {1\over2}{\eta}(1-\eta L)\nm{\ga_k}^2,}
		Substituting~\eqref{eq:fxtrafy1} into~\eqref{eq:gFdiffconxak+1UUk1} and combining with~\eqref{eq:gFUdiffdp1}, we complete this proof with the conditions $2\eta L \leq 1$ and $0 < \pa \leq 1/4$.
	\end{proof}
	
	\subsection{Linear coupling}\label{sec:linear coupling1}
	We follow the analysis of AGM~\cite{allen2017linear} to derive bounds for the descent of the ``average" parts accounting for additional errors induced by inexact gradients resulting from consensus errors. To control these additional errors, we use the Lyapunov functions constructed in~\S\ref{sec:Lyapunov1}. Subsequently, we perform ``linear coupling" on the ``average parts" and leverage the Lyapunov functions to establish the convergence rate of~\cref{alg:smooth&strongly-convex}. We start with the one-step analysis.
	\begin{lemma}\label{lem:oneproc1}
		(One-step Analysis)
		Under Assumption~\ref{assp:L} and the condition $\frac{\bet}{\eta\alp_k} - \frac{\mu L}{\mu+L} \leq 0$, we have for any $0\leq k < K $ that
		\eql{\label{eq:oneproc1}}{
			f\pr{\xa_k} - f\pr{\xx^*}
			\leq& \pr{1 - \bet}\jr{\ga_k, \xa_k - \za_{k}} + \alp_k\eta\nm{\ga_k}^2 + \frac{ L }{n}\mt{\UU_k - \one\xa_k}^2 \\
			&  + \frac{\pr{1 - \bet}\nm{\za_k - \xx^*}^2 - \nm{\za_{k+1} - \xx^*}^2 }{2\eta\alp_k}.
		}
	\end{lemma}
	
	\begin{proof}
		Using~\eqref{eq:za-xx*1} and~\eqref{eq:zaupd1} yields that
		\eq{
			&\jr{\ga_k, \xa_k - \xx^*} = \jr{\ga_k, \xa_k - \za_{k+1} + \za_{k+1} - \xx^*}  \\
			\comleq{\eqref{eq:za-xx*1}}& \jr{\ga_k, \xa_k - \za_{k+1}}
			+ \frac{\bet}{2\eta\alp_k}\nm{\xa_k - \xx^*}^2 + \frac{\pr{1 - \bet}\nm{\za_k - \xx^*}^2 - \nm{\za_{k+1} - \xx^*}^2 }{2\eta\alp_k} \\
			\comeq{\eqref{eq:zaupd1}}& \pr{1 - \bet}\jr{\ga_k, \xa_k - \za_{k}} + \alp_k\eta\nm{\ga_k}^2 \\
			& + \frac{\bet}{2\eta\alp_k}\nm{\xa_k - \xx^*}^2 + \frac{\pr{1 - \bet}\nm{\za_k - \xx^*}^2 - \nm{\za_{k+1} - \xx^*}^2 }{2\eta\alp_k}.
		}
		For any $0 \leq k < K$, we have the following inequalities
		\eq{
			&f\pr{\xa_k} - f\pr{\xx^*}  \comleq{\eqref{eq:fxa-xx*1}} \jr{\ga_k, \xa_k - \xx^*} - \frac{\mu L}{2(\mu+L)}\nm{\xa_k - \xx^*}^2 + \frac{ L}{n}\mt{\UU_k - \one\xa_k}^2  \\
			\leq& \pr{1 - \bet}\jr{\ga_k, \xa_k - \za_{k}} + \alp_k\eta\nm{\ga_k}^2 + \frac{ L }{n}\mt{\UU_k - \one\xa_k}^2 \\
			& + \prb{\frac{\bet}{2\eta\alp_k} - \frac{\mu L}{2(\mu+L)}} \nm{\xa_k - \xx^*}^2 + \frac{\pr{1 - \bet}\nm{\za_k - \xx^*}^2 - \nm{\za_{k+1} - \xx^*}^2 }{2\eta\alp_k}.
		}
		Then, the proof is completed by the condition $\frac{\bet}{\eta\alp_k} - \frac{\mu L}{\mu+L} \leq 0$.
	\end{proof}
	
	\begin{lemma}\label{lem:sgltogether}
		Under Assumption~\ref{assp:C}, Assumption~\ref{assp:L}, and the conditions in Lemmas~\ref{lem:Lyprecc1},~\ref{lem:unifysummtConU}, and~\ref{lem:oneproc1}.  Define an auxiliary quantity $\qk = \frac{48\vmi^2\tratC^2\pc\pk }{\gap^6}$ and
		\eq{
			\qkn_k =
			\left\{
			\begin{split}
				&\pr{1 - \bet}\pr{\alp_k + \qk\alp_k^2\tau_{k+1}\eta^2L^2}\frac{1 - \tau_k}{\tau_k},\ 0\leq k \leq K-2 \\
				&\pr{1 - \bet}\frac{\alp_{k}(1 - \tau_{k})}{\tau_{k} },\ k = K-1, K.
			\end{split}
			\right.
		}
		If the parameters and stepsizes satisfy: $\pr{1 - \bet}^{-k} \qk \alp_k^2$ is non-decreasing in $k$ and for $k=0,\dots,K-1$
		\subeqnum{
			&\frac{4}{3}\alp_k^2 \leq \frac{1-\eta L}{2}\prb{\qkn_k + \frac{\alp_k}{2}} \label{eq:alppkgak1} \\
			&2  \qk  \eta^2 L^2 + \frac{6  \pk\pg \eta L}{n \gap^4} \leq \frac{1}{6}, \label{eq:gakQk2eta1} \\
			&\frac{\qkn_k }{2} + \frac{\pr{2\qkn_k + \alp_k} \eta L}{4} + 2 \qk \alp_k^2 \eta^2 L^2 \leq \frac{\alp_k}{\tau_k}, \label{eq:QK4conU1} \\
			&\qkn_{k+1} \leq \pr{1 - \bet}\prb{\qkn_k + \frac{\alp_k}{2}}, \label{eq:fykrecurrence1} \\
			& \qk \alp_K^2 \eta^2 L^2 \leq \frac{\qkn_K}{2},  \label{eq:fyaKalp1}
		}
		then, we have
		\eql{\label{eq:sgltogether}}{
			&\pr{1 - \bet}^{-K}\qkn_K \pr{f(\ya_K) - f(\xx^*)} + \sum_{k=0}^{K-1}\pr{1 - \bet}^{-k} \big( {1\over 3}\alp_k^2 \eta \nm{\ga_k}^2 + \frac{2\alp_k L}{\tau_k n}\Lyp_k \big) \\
			\leq& \frac{\nm{\za_0 - \xx^*}^2  }{\eta} + \frac{6L\pk\alp_0^2}{n\gap} \Lyp_0 + 2\qkn_0 \pr{f(\ya_0) - f(\xx^*)}.
		}
	\end{lemma}
	
	\begin{proof}
		It follows by taking the weighted sum on both sides of~\eqref{eq:oneproc1} that
		\eql{\label{eq:sumalpkfgap1}}{
			&\sum_{k=0}^{K-1}\pr{1 - \bet}^{-k}\alp_k  \pr{f(\xa_k) - f(\xx^*)}
			\comleq{\eqref{eq:oneproc1}} \sum_{k=0}^{K-1}\pr{1 - \bet}^{-k+1}{\alp_k\jr{\ga_k, \xa_k - \za_{k}}} \\
			& \qquad  +  \sum_{k=0}^{K-1}\pr{1 - \bet}^{-k}\prb{\alp_k^2\eta\nm{\ga_k}^2 + \frac{ \alp_k  L   }{n}\mt{\UU_k - \one\xa_k}^2} + \frac{\nm{\za_0 - \xx^*}^2  }{2\eta}.
		}
		Define an auxiliary variable
		\eq{
			Q_K = \sum_{k=0}^{K-1}\prb{1 - \bet}^{-k} \prB{\alp_k  \prb{f(\xa_k) - f(\xx^*)}  + \frac{\alp_k^2 \eta}{3} \nm{\ga_k}^2 + {\frac{2\alp_k L}{ \tau_k                n}}\Lyp_k}.
		}
		By~\eqref{eq:sumalpkfgap1} and the fact $\tau_k \leq 1$,
		\eq{
			Q_K \comleq{\eqref{eq:sumalpkfgap1}}& \frac{\nm{\za_0 - \xx^*}^2  }{2\eta} + \sum_{k=0}^{K-1}\pr{1 - \bet}^{-k+1}{\alp_k\jr{\ga_k, \xa_k - \za_{k}}} \\
			& + \sum_{k=0}^{K-1}\pr{1 - \bet}^{-k}\prb{\frac{4}{3}\alp_k^2\eta\nm{\ga_k}^2 + \frac{ 3 \alp_k  L   }{\tau_k n}\Lyp_k }.
		}
		Then, based on Lemmas~\ref{lem:Lyprecc1},~\ref{lem:unifysummtConU}, and~\ref{lem:gFdiff1}, we have
		\eq{
			Q_K \leq& Q_{K}^{(1)} + Q_K^{(2)} + Q_K^{(3)} + Q_K^{(4)} + \frac{\nm{\za_0 - \xx^*}^2  }{2\eta} + \frac{3 L\pk\alp_0^2}{n\gap} \Lyp_0,
		}
		where
		\eq{
			Q_K^{(1)} =& \sum_{k=0}^{K-2}\pr{1 - \bet}^{-k+1}\pr{\alp_k + \qk\alp_k^2\tau_{k+1}\eta^2L^2}\jr{\ga_k, \xa_k - \za_{k}} \\
			&+\pr{1 - \bet}^{-K+2}\alp_{K-1} \jr{\ga_{K-1}, \xa_{K-1} - \za_{K-1}} + \frac{4}{3}\sum_{k=0}^{K-1}\pr{1 - \bet}^{-k}\alp_k^2\eta\nm{\ga_k}^2, 
		}
		\eq{
			Q_K^{(2)} =& \sum_{k=0}^{K-2} \pr{1 - \bet}^{-k} \qk \alp_{k}^2 \eta^2 L^2 \pr{\pr{f(\ya_{k+2}) - f(\xx^*)} - \pr{f(\ya_{k+1}) - f(\xx^*)} }, 
		}
		\eq{
			Q_K^{(3)} =& \sum_{k=0}^{K-1} \pr{1 - \bet}^{-k} \alp_k^2 \prb{2\qk\eta^3 L^2 + \frac{6\pk\pg \eta^2 L}{n \gap^4} } \nm{\ga_k}^2, 
		}
		\eq{
			Q_K^{(4)} =& \sum_{k=0}^{K-1} 2\pr{1 - \bet}^{-k} \frac{\qk \alp_k^2  \eta^2 L^3}{n} \mt{\UU_k - \one\xa_k}^2.
		}
		For $Q_K^{(1)}$, we bound $\jr{\ga_k, \xa_k - \za_{k}}$ by~\eqref{eq:gaxazak},  and the term $\alp_k^2\nm{\ga_k}^2$ is bounded by~\eqref{eq:fxak-yak+1} and~\eqref{eq:alppkgak1} as follows:
		\eq{
			\frac{4}{3}\alp_k^2\eta\nm{\ga_k}^2 \comleq{\eqref{eq:alppkgak1}}& \frac{1-\eta L}{2}\prb{\qkn_k  + \frac{\alp_k}{2}}\eta\nm{\ga^k}^2   \\
			\comleq{\eqref{eq:fxak-yak+1}}& \prb{\qkn_k + \frac{\alp_k}{2}}\prB{f(\xa_k) - f(\ya_{k+1}) + \frac{\eta L^2}{2n }\mt{\UU_k - \one\xa_k}^2}.  
		}
		Thus,
		\eql{\label{eq:QK1fyconUxa1}}{
			Q_K^{(1)} \leq& \sum_{k=0}^{K-1} \pr{1 - \bet}^{-k} \prB{\qkn_k f(\ya_k) + \frac{\alp_k}{2}f(\xa_k) - \pr{\qkn_k + \frac{\alp_k}{2}} f(\ya_{k+1}) }  \\
			& + \sum_{k=0}^{K-1} \pr{1 - \bet}^{-k} \prB{\frac{\qkn_k L}{2n} + \frac{\pr{2\qkn_k + \alp_k} \eta L^2}{4n}} \mt{\UU_k - \one\xa_k}^2.
		}
		For $Q_K^{(2)}$, since $\pr{1 - \bet}^{-k}  \alp_k^2 $ is non-decreasing, we have
		\eq{
			Q_K^{(2)} \leq \pr{1 - \bet}^{-K} \qk \alp_K^2 \eta^2 L^2 \pr{f(\ya_{K}) - f(\xx^*)}.
		}
		For $Q_K^{(3)}$, it follows directly from~\eqref{eq:gakQk2eta1} that
		$
		Q_K^{(3)} \leq \frac{1}{6} \sum_{k=0}^{K-1} \pr{1 - \bet}^{-k} \alp_k^2 \eta \nm{\ga_k}^2.
		$
		For $Q_K^{(4)} $ and the consensus error on the RHS of~\eqref{eq:QK1fyconUxa1}, using~\eqref{eq:QK4conU1}, we have
		\eq{
			&Q_K^{(4)} + \sum_{k=0}^{K-1} \pr{1 - \bet}^{-k}\prB{\frac{\qkn_k L}{2n} + \frac{\pr{2\qkn_k + \alp_k} \eta L^2}{4n}} \mt{\UU_k - \one\xa_k}^2 \\
			\leq&  \sum_{k=0}^{K-1} \pr{1 - \bet}^{-k} \frac{ \alp_k  L   }{\tau_k n}\mt{\UU_k - \one\xa_k}^2.
		}
		By combining the above equations
		\eq{
			&Q_K \leq  {\sum_{k=0}^{K-1} \pr{1 - \bet}^{-k} \prB{\qkn_k \pr{f(\ya_k) -f(\xx^*)}  - \pr{\qkn_k + \frac{\alp_k}{2}} \pr{f(\ya_{k+1})-f(\xx^*) } } } \\
			& + \pr{1 - \bet}^{-K} \qk \alp_K^2 \eta^2 L^2 \pr{f(\ya_{K}) - f(\xx^*)} + \frac{Q_K}{2} + \frac{\nm{\za_0 - \xx^*}^2  }{2\eta} + \frac{3L\pk\alp_0^2}{n\gap} \Lyp_0 \\
			&\comleq{\eqref{eq:fykrecurrence1}} \frac{\nm{\za_0 - \xx^*}^2  }{2\eta} + \frac{3L\pk\alp_0^2}{n\gap} \Lyp_0 + \frac{Q_K}{2} + \qkn_0 \pr{f(\ya_0) - f(\xx^*) }               \\
			& \qquad\quad
			- \pr{1 - \bet}^{-K}\prB{\pr{1 - \bet}\prb{\qkn_{K-1} + \frac{\alp_{K-1}}{2}} - \qk \alp_K^2 \eta^2 L^2  } \pr{f(\ya_K) - f(\xx^*) }.
		}
		By~\eqref{eq:fykrecurrence1} and~\eqref{eq:fyaKalp1}, $\pr{1 - \bet}\pr{\qkn_{K-1} + \frac{\alp_{K-1}}{2}} - \qk \alp_K^2 \eta^2 L^2 \geq \frac{\qkn_K  }{2} $.
		Then, the proof is completed by rearranging the above equation.
	\end{proof}
	
	\begin{proof}[Proof of~\cref{thm:smooth}]
		By the $L$-smoothness of $f$, we have
		\eq{
			&f\big({\vv_{K,i}\inv\yy_{K,i}}) - f\pr{\xx^*} \\
			\leq~& f\pr{\ya_K} - f\pr{\xx^*} + \jr{\gf\pr{\ya_K}, \vv_{K,i}\inv\yy_{K,i} - \ya_K} + \frac{L}{2}\nmb{\vv_{K,i}\inv\yy_{K,i} - \ya_K}^2  \\
			\leq~& f\pr{\ya_K} - f\pr{\xx^*} + \frac{1}{2L}\nm{\gf\pr{\ya_K}}^2 + L\nmb{\vv_{K,i}\inv\yy_{K,i} - \ya_K}^2  \\
			\comleq{\eqref{eq:SVRGimp}}& 2\pr{f\pr{\ya_K} - f\pr{\xx^*}} + L\nmb{\vv_{K,i}\inv\yy_{K,i} - \ya_K}^2.
		}
		Taking the average for $i=1,\dots,n$ yields
		\eql{\label{eq:LHSofsm1}}{
			\small
			&\frac{1}{n} \sum_{i=1}^n f\big({\vv_{K,i}\inv\yy_{K,i}}\big) - f\pr{\xx^*}
			\leq 2\pr{f\pr{\ya_K} - f\pr{\xx^*}} + \frac{L}{n}\mt{\VV_K\inv\YY_K - \one\ya_K}^2.
		}
		Next, we upper bound $\mt{\VV_{K}\inv\YY_{K} - \one\ya_K}$. We decompose $\VV_{K}\inv\YY_K - \one\ya_K  $ as
		\eq{
			&\VV_{K}\inv\YY_K - \one\ya_K
			= \VV_{K}\inv\Con\YY_k + \VV_{K}\inv\pr{\pp - \vv_{K}}\xa_{K-1} - \eta\VV_{K}\inv\pr{\pp - \vv_{K}}\ga_{K-1}.
		}
		Thus,
		\eql{\label{eq:conVinvY1}}{
			&\quad \mt{\VV_{K}\inv\YY_{K} - \one\ya_K}^2 \leq \vmi^2\tratC^2\mC{\VV_{K}\pr{\VV_{K}\inv\YY_{K} - \one\ya_K}}^2  \\
			&\leq 3\vmi^2\tratC^2\prb{\mC{\Con\YY_K  }^2 + \nC{\pp - \vv_{K}}^2\nm{\xa_{K-1}}^2 + \eta^2\nC{\pp - \vv_{K}}^2\nm{\ga_{K-1}}^2  }  \\
			&\comleq{\eqref{eq:conY1}, \  \eqref{eq:vLyap1}} 3\vmi^2\tratC^2\prb{\mC{\Con\XX_{K-1}}^2 + \frac{\eta^2}{\gap}\mC{\Con\GG_{K-1}}^2} \\
			&\qquad\qquad + 3\vmi^2\tratC^2\pr{1 - \gap}^{2{K}}\nC{\vv_0 - \pp}^2\prb{\nm{\xa_{K-1}}^2 + \eta^2\nm{\ga_{K-1}}^2}  \\
			&\leq {3\over2}\Lyp_{K-1} + 3\vmi^2\tratC^2\pr{1 - \gap}^{2{K}}\nC{\vv_0 - \pp}^2\eta^2\nm{\ga_{K-1}}^2.
		}
		Next, we separately prove the convergence rates for non-strongly convex and strongly convex cases.
		
		\noindent\textbf{$\bullet$ Non-strongly convex ($\beta=0$).}
		Since $\pa \in (0, 1/4]$ and $\tau_k  = \pa/\alp_k\leq \pa\leq 1/4 $, we have $\qkn_k \geq     \pr{1-\tau_k}\frac{\alp_k^2}{\pa} \geq \frac{3\alp_k^2}{4\pa}\geq 3\alpha_k^2.$ Thus, the condition~\eqref{eq:alppkgak1} is satisfied if $\eta L \leq 1/9$. We assume that \eqref{eq:gakQk2eta1} is satisfied in the following. By $\alpha_k\tau_{k+1}\leq c_+\leq1/4$, $\eta L\leq 1/9$, and $1\leq 1/(4\tau_k)$, we have $\frac{\qkn_k }{2} + \frac{\pr{2\qkn_k + \alp_k} \eta L}{4} + 2 \qk \alp_k^2 \eta^2 L^2\leq \prb{{9\over 16}+{23\over 36}q\eta^2L^2} \frac{\alp_k}{\tau_k}.$ The condition~\eqref{eq:QK4conU1} is satisfied if \eqref{eq:gakQk2eta1} is satisfied. It is also easy to check that~\eqref{eq:fyaKalp1} is satisfied when \eqref{eq:gakQk2eta1} is satisfied. Next, we consider the condition~\eqref{eq:fykrecurrence1}. When $\bet = 0$, we have $
		\qkn_k = \frac{\alp_k + \qk \alp_k^2\tau_{k+1}\eta^2L^2}{\tau_k} - \pr{\alp_k + \qk \alp_k^2\tau_{k+1}\eta^2L^2}.
		$
		Since by definition $\frac{\tau_{k+1}}{\tau_k}$ is increasing in $k$, we have
		$
		\alp_k + \qk \alp_k^2\tau_{k+1}\eta^2L^2 = \alp_k + \qk \pa^2 \alp_k \frac{\tau_{k+1}}{\tau_k}\eta^2L^2
		$
		increasing in $k$. Thus, for $k=1,\cdots,K-3$, we have
		\eq{
			\qkn_{k+1} - \qkn_k \leq& \frac{\alp_{k+1} + \qk \alp_{k+1}^2\tau_{k+2}\eta^2L^2}{\tau_{k+1} } - \frac{\alp_k + \qk \alp_k^2\tau_{k+1}\eta^2L^2}{\tau_k} \\
			\leq& \pr{\alp_{k+1}^2 - \alp_k^2}\prb{\frac{1}{\pa} +q\eta^2L^2}+ \qk\eta^2L^2  \alp_k^2\prb{1- \frac{\tau_{k+1}}{\tau_k}} .
		}
		When $w_1\leq c_+/5$, we have $\alp_{k+1}^2 - \alp_k^2 = {2w_1 + 2w_1^2k + w_1^2} \leq (2w_1+w_1^2)\alp_k\leq 0.41\pa\alp_k$, and $\alpha_k\pr{1 -\frac{\tau_{k+1}}{\tau_k}} \leq {w_1}<1/20$, we have
		$
		\qkn_{k+1} - \qkn_k \leq \prb{\frac{1}{\pa} + q\eta^2L^2} \cdot 0.41\pa\alp_k +  q\eta^2L^2\frac{ \alp_k}{20}  \leq \frac{\alp_k}{2},
		$
		when the condition~\eqref{eq:gakQk2eta1} is satisfied. The cases for $k=K-2$ and $K-1$ follow the same way. Thus, the condition~\eqref{eq:fykrecurrence1} is satisfied. Then with the upper bound on $\eta$, we have $q\eta^2L^2\leq 1/84$ and $\frac{6\pk\pg\eta L}{n\gap^4} \leq \frac{1}{7}$ to make the condition~\eqref{eq:gakQk2eta1} satisfied.
		
		The conditions required by~\cref{lem:Lyprecc1},~\cref{lem:oneproc1},~\cref{lem:unifysummtConU} are naturally satisfied by $\bet = 0$. Now, all conditions of~\cref{lem:sgltogether} have been met.
		
		Then, we have
		\eql{\label{eq:funcgapLypK-1}}{
			&\frac{1}{n}\sum_{i=1}^n   {f(\vv_{K, i}\inv\yy_{K, i}) - f(\xx^*)  }    + \frac{L}{n}\mt{\VV_K\inv\YY_K - \one\ya_K}^2
			+ \frac{ L    }{ n}\Lyp_{K-1}  \\
			\comleq{\eqref{eq:LHSofsm1}}& 2\pr{f\pr{\ya_K} - f\pr{\xx^*}} + \frac{2L}{n}\mt{\VV_K\inv\YY_K - \one\ya_K}^2 + \frac{ L    }{ n}\Lyp_{K-1}  \\
			\comleq{\eqref{eq:conVinvY1}}&  2\pr{f\pr{\ya_K} - f\pr{\xx^*}} + \frac{ 4  L    }{ n}\Lyp_{K-1} \\
			&+ \frac{6L\vmi^2\tratC^2\pr{1 - \gap}^{2K}\eta^2}{n}  \nC{\vv_0 - \pp}^2\nm{\ga_{K-1}}^2                    \\
			\comleq{\eqref{eq:sgltogether}}&  \max\drB{\frac{2}{\qkn_K}, \frac{2\tau_{K-1}}{\alp_{K-1}},  \frac{18 L\vmi^2\tratC^2\pr{1 - \gap}^{2K}\eta\nC{\vv_0 - \pp}^2}{n \alp_{K-1}^2}} \\
			&\qquad  \cdot
			\prB{\frac{\nm{\za_0 - \xx^*}^2  }{\eta} + \frac{6L\pk }{n\gap} \Lyp_0 + 2\qkn_0   \pr{f(\ya_0) - f(\xx^*)}}.
		}
		Then, the bound for functional gap and consensus error~\eqref{eq:smoothrate} follows by the condition on $\eta$, the definition of $\qkn_k$, and the fact $\Lyp_{K-1} \geq 0$, and $\wa \leq \frac{\pa}{5}$.
		
		\noindent\textbf{$\bullet$ Strongly convex.}
		Under the conditions in Theorem~\ref{thm:smooth}, $\pa \in (0, 1/4]$, $\alp_k \equiv \alp$, $\tau_k \equiv \tau$, $\tau = \frac{\pa}{\alp} \leq \frac{1}{4}$, $\bet \leq \frac{\tau}{2}\leq {1\over 8}$. Then, we have $\qkn_k +{\alp\over 2}\geq 4(1 - \bet)\alp^2-(1/2-\bet)\alp  \geq (3.5 - 3\bet)\alp^2\geq 3\alp^2$, i.e., the condition~\eqref{eq:alppkgak1} is satisfied when $\eta L\leq 1/9$. We assume that \eqref{eq:gakQk2eta1} is satisfied in the following. Similar to the non-strongly convex case, the conditions~\eqref{eq:QK4conU1} and~\eqref{eq:fyaKalp1} are satisfied if the condition~\eqref{eq:gakQk2eta1} is satisfied and $\eta L<1/9$. In the strongly convex case, the condition~\eqref{eq:fykrecurrence1} reduces to $\bet \hat q_k\leq (1-\bet){\alp\over 2}$, which is further reduced to $\bet\leq {\tau\over 2(1-\tau)(1+c_+q\eta^2L^2)}$. Thus, the condition~\eqref{eq:fykrecurrence1} is satisfied when $\bet\leq {\tau\over 2}$ and the condition~\eqref{eq:gakQk2eta1} is satisfied. Again, with the upper bound on $\eta$, we have $q\eta^2L^2\leq 1/84$ and $\frac{6\pk\pg\eta L}{n\gap^4} \leq \frac{1}{7}$ to make the condition~\eqref{eq:gakQk2eta1} satisfied.
		
		The conditions required by Lemmas~\ref{lem:fa2},~\ref{lem:oneproc1}, and~\ref{lem:unifysummtConU} are directly satisfied by the facts $\bet \leq \min\pr{\frac{\mu\alp\eta}{2}, \frac{\gap}{4},{\gap^2\over 8\tau}}$.
		Now, all conditions of~\cref{lem:sgltogether} have been met.  Then, by similar arguments to the non-strongly convex case, we combine~\eqref{eq:sgltogether} with~\eqref{eq:LHSofsm1} and~\eqref{eq:conVinvY1} to get
		\eql{\label{eq:funcgapLypK-1stronglyconvex}}{
			&\frac{1}{n}\sum_{i=1}^n   {f(\vv_{K, i}\inv\yy_{K, i}) - f(\xx^*)  }    + \frac{L}{n}\mt{\VV_K\inv\YY_K - \one\ya_K}^2 + \frac{L}{n}\Lyp_{K-1} \\
			\leq& \pr{1 - \bet}^{K-1} \cdot  \max\drB{\frac{2(1-\bet)}{\qkn_K}, \frac{2\tau}{\alp}, \frac{18L\vmi^2\tratC^2\pr{1 - \gap}^{2K}\nC{\vv_0 - \pp}^2\eta}{n \alp^2}} \\
			&\qquad  \cdot
			\prB{\frac{\nm{\za_0 - \xx^*}^2  }{\eta} + \frac{6L\pk\alp^2}{n\gap} \Lyp_0 + 2\qkn_0 \pr{f(\ya_0) - f(\xx^*)}}.
		}
		Then, the bound for functional gap and consensus error~\eqref{eq:smoothrate} follows by the condition on $\eta$ and the definition of $\qkn \equiv \qkn_k$.
	\end{proof}
	
	\begin{remark}
		We can establish that $\mt{\Con\XX_k}^2$, $\mt{\Con\YY_k}^2$, $\mt{\Con\ZZ_k}^2$, and $\mt{\Con\GG_k}^2$ converge at $O(1/k^2)$ in the non-strongly convex case and $O((1 - C\sqrt{\frac{\mu}{L}})^k)$ in the strongly convex case. This result emerges due to the presence of $\Lyp_{K-1}$ in the left-hand sides of~\eqref{eq:funcgapLypK-1} and~\eqref{eq:funcgapLypK-1stronglyconvex} and the definition of $\Phi_{k-1}$. 
	\end{remark}
	
	\section{Numerical Experiments}\label{sec:numericlexperiment}
	We evaluate the performance of $\NAPD$/$\NAPDSC$ using a real dataset and compare with existing methods using column stochastic matrices, including Subgradient-Push~\cite{nedic2014distributed} and Push-DIGing/ADD-OPT~\cite{nedic2017achieving,xi2017add}\footnote{Note that even though the original papers do not explicitly state it, it can be proven that Push-DIGing/ADD-OPT achieves a convergence rate of $O(1/k)$ for non-strongly convex objectives.}.
	
	We utilize the Banknote Authentication Data Set from the UCI Machine Learning Repository~\cite{misc_banknote_authentication_267}, which comprises images captured to assess the authenticity of banknotes through an authentication process. Each training instance consists of a feature vector $\zz \in \Real^4$ and a corresponding label $\la \in \{-1, +1\}$. Our experiments employ a system with $n = 50$ agents, each possessing $d = 20$ training examples. We use $\zz_{i,j}$ and $\la_{i,j}$ to denote the $j$-th feature vector and label of agent $i$, respectively.
	
	We consider two classes of objective functions. The first class is the least square problem (with $\ell_2$-penalty):
	\eq{
		\hspace{-1.7cm} \textbf{(Problem I)\qquad } f(\xx) = \frac{1}{n}\sum_{i\in \calN} \sum_{j=1}^{d} \nm{\zz_{i,j}\tp\xx - \la_{i,j}}^2 + \frac{\mu}{2}\nm{\xx}^2.
	}
	We set $\mu = 0$ and $\mu=0.05$ for the non-strongly and strongly convex cases, respectively. The second class comes from the ($\ell_2$-penalized) logistic regression model:
	\eq{
		\textbf{(Problem II)\qquad } f\pr{\xx} = \frac{1}{n}\sum_{i\in\calN} \sum_{j=1}^{d} \log\pr{1 + \exp\pr{-\la_{i,j}\zz_{i,j}\tp\xx}} + \frac{\mu}{2}\nm{\xx}^2.
	}
	We also set $\mu = 0$ and $\mu = 0.05$ for both cases, respectively.
	
	The directed graph $\calG$ is constructed by randomly adding $N_e$ directed links to the undirected cycle. To construct the column stochastic matrix, each agent assigns the same weight for any out-neighbors and itself, i.e., $\CC_{ij} = 1/{\babs{\calN_{\CC, i}^+\cup\dr{i}}}$ if $j\in \dr{i}\cup\calN_{\CC, i}^+$ and $\CC_{ij} = 0$ otherwise. Here, $\calN_{\CC, i}^+$ is the set of out-neighbors of agent $i$ in graph $\calG$. Then, $N_e$ reflects the density of the graphs. The larger $N_e$ is, the denser the graph will be. Each entry in the initial position $\xx_{0, i}$ is taken from standard Gaussian distribution independently. The initial vector $\vv_0$ is set to be the all-ones vector. In the figures below, the $x$-axis represents the number of iterations and the $y$-axis is the loss function defined as $\frac{1}{n}\sum_{i\in\calN} f\prb{\vv_{K,i}\inv\yy_{K,i}} - f\pr{\xx^*}$ in Theorems~\ref{thm:smooth}.
	
	\begin{figure}[!ht]
		\centering
		\includegraphics[width=0.8\textwidth]{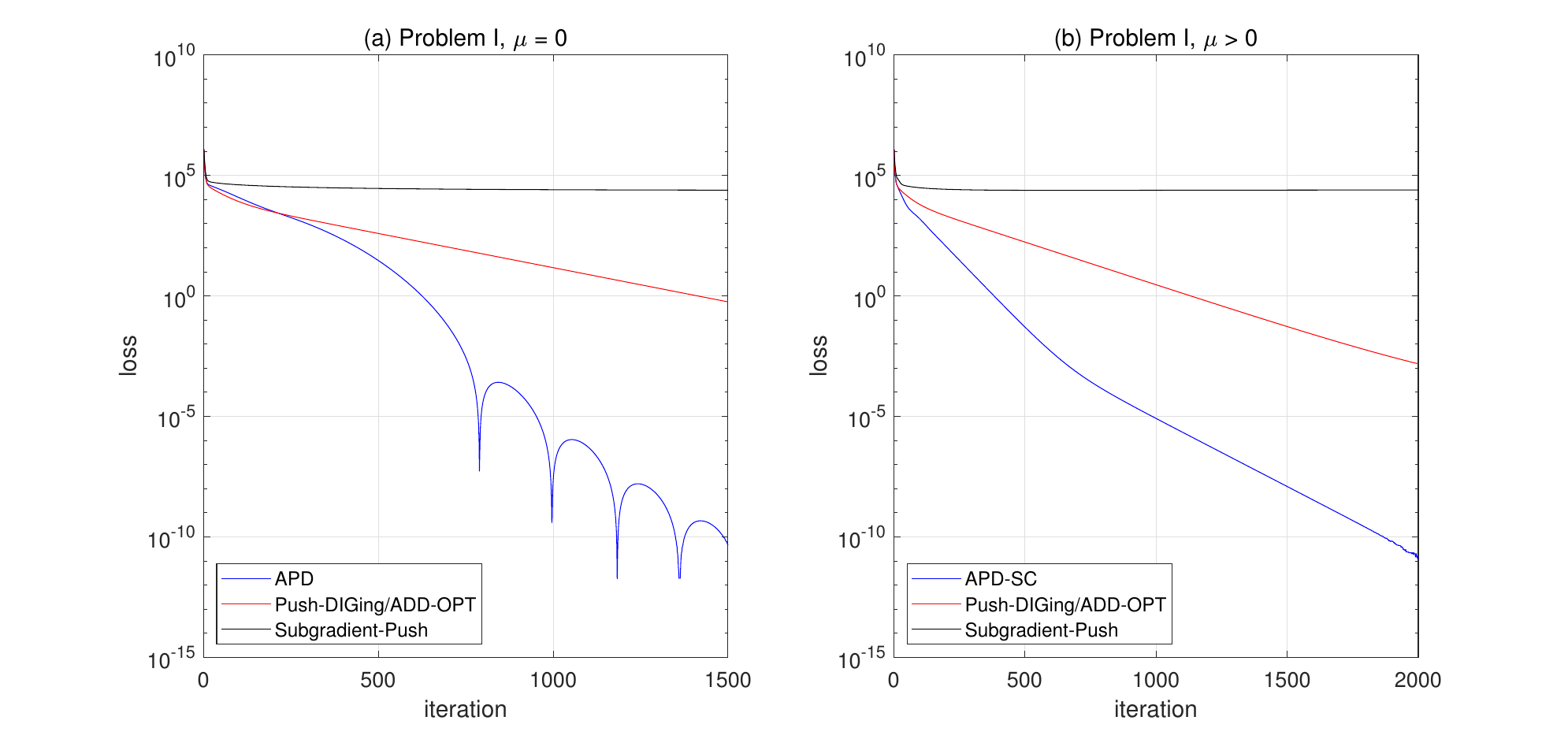}
		\caption{Performance comparison in the non-strongly and strongly convex cases in Problem I.}
		\label{fig:func1}
	\end{figure}
	
	\begin{figure}[!ht]
		\centering
		\includegraphics[width=0.8\textwidth]{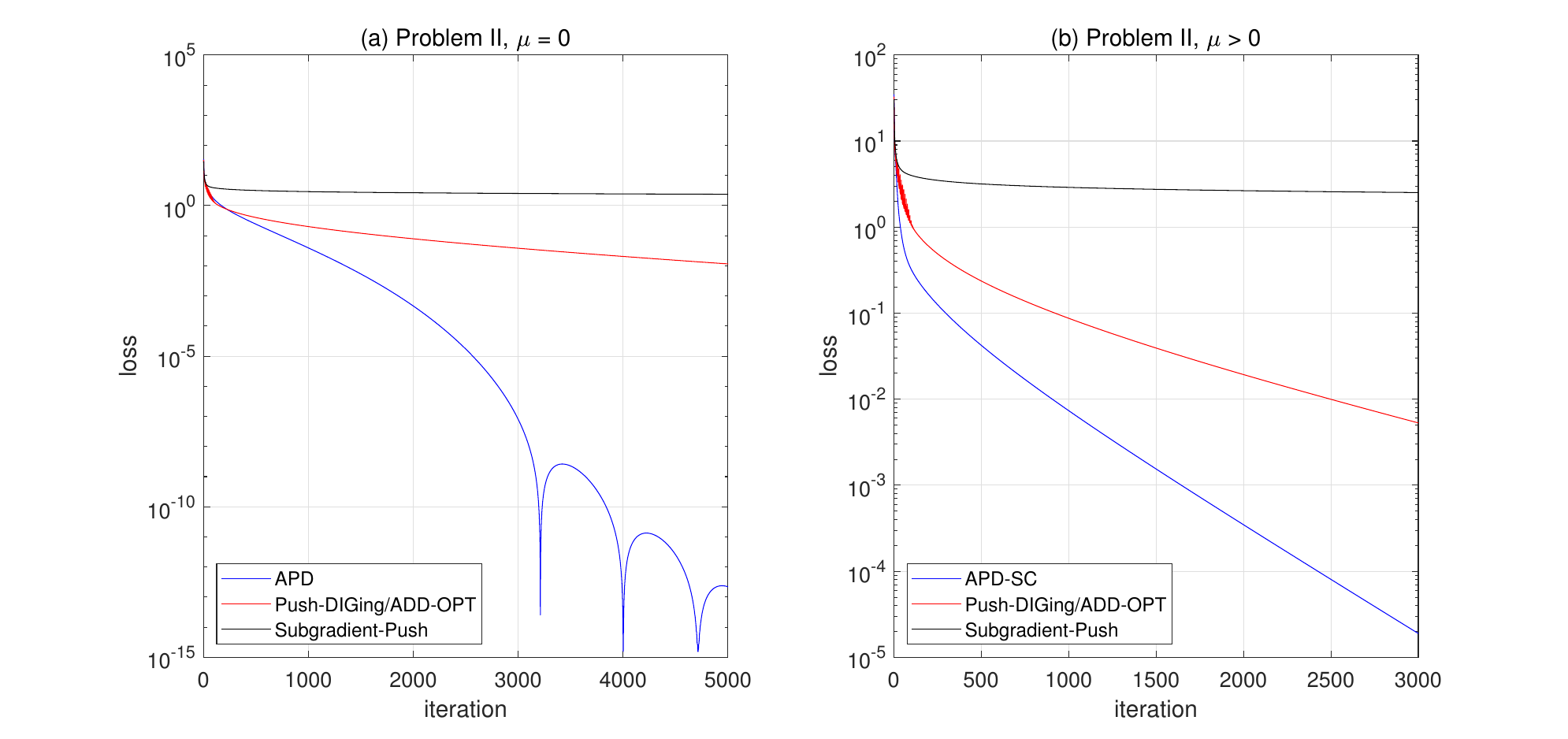}
		\caption{Performance comparison in the non-strongly and strongly convex cases in Problem II.}
		\label{fig:func2}
	\end{figure}
	
	\begin{figure}[!ht]
		\centering
		\includegraphics[width=0.8\textwidth]{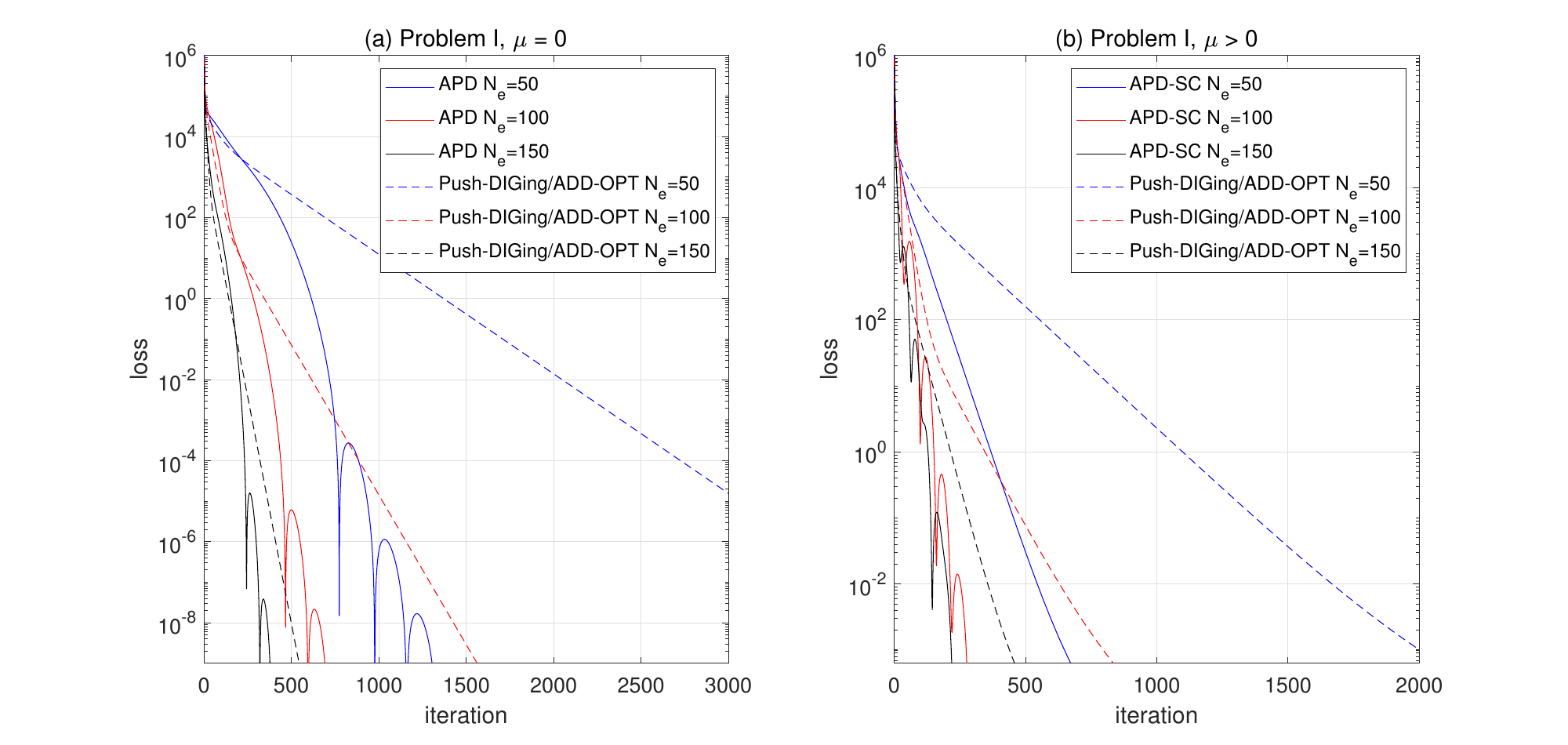}
		\caption{Performance comparison when the graphs are getting denser.}
		\label{fig:graphchange}
	\end{figure}
	
	We choose the parameters and stepsizes for the algorithms in the following way. We tune the stepsizes to optimize the performance of Push-DIGing/ADD-OPT and Subgradient-Push. The stepsizes for Push-DIGing/ADD-OPT can be found in~\cref{tab:parastepsize}. For Subgradient-Push, we set  $\eta_k = 5\times 10^{-5}/\sqrt{k}$, $10^{-4}/\sqrt{k}$, $2\times 10^{-3}/\sqrt{k}$, $4\times 10^{-3}/\sqrt{k}$ in~Fig.~\ref{fig:func1}(a), Fig.~\ref{fig:func1}(b), Fig.~\ref{fig:func2}(a), Fig.~\ref{fig:func2}(b), respectively. For APD and APD-SC, we set $c_+ = 0.25$. We set $\wa = 0.01$ and $\bet = 0$ for all experiments of APD, while we set $\alp = 5$ and $\bet = \min\dr{\frac{\eta\alp\mu}{2}, \frac{\tau}{2}}$ for all experiments of APD-SC. We set the stepsizes of APD(-SC) to be half of the stepsizes of Push-DIGing/ADD-OPT in each experiment. We remark that APD(-SC) can have faster convergence if we tune its parameters more carefully while the performance of Push-DIGing/ADD-OPT has been optimized.
	
	\begin{table}
		\centering
		\caption{Parameter list for APD (APD-SC) and Push-DIGing/ADD-OPT. In all the experiments of APD(-SC), we set \rev{$\pa = 1/4  $} and $\bet = 0$ for APD, $\bet = \min\dr{\frac{\eta\alp\mu}{2}, \frac{\tau}{4}}$ for APD-SC.}
		\label{tab:parastepsize}
		\begin{tabular}{|l|l|l|}
			\hline
			& APD (APD-SC) & Push-DIGing/ADD-OPT \\ \hline
			\cref{fig:func1}(a) & $\eta = 2\times 10^{-5}$, $w_1 = 0.01$ & $\eta = 4\times 10^{-5}$   \\ \hline
			\cref{fig:func1}(b) & $\eta = 2.5\times 10^{-5}$, $\alp=5$ & $\eta = 5\times 10^{-5}$ \\ \hline
			\cref{fig:func2}(a) & $\eta = 10^{-3}$, $w_1 = 0.01$ & $\eta = 2\times 10^{-3}$  \\ \hline
			\cref{fig:func2}(b) & $\eta = 10^{-3}$, $\alp=5$ & $\eta = 2\times 10^{-3}$ \\ \hline
			\cref{fig:graphchange}(a) $N_e = 50$ & $\eta = 2\times 10^{-5}$, $w_1 = 0.01$ & $\eta = 4\times 10^{-5}$   \\ \hline
			\cref{fig:graphchange}(a) $N_e = 100$ & $\eta = 5\times 10^{-5}$, $w_1 = 0.01$ & $\eta = 10^{-4}$   \\ \hline
			\cref{fig:graphchange}(a) $N_e = 150$ & $\eta = 1.5\times 10^{-4}$, $w_1 = 0.01$ & $\eta = 3\times 10^{-4}$   \\ \hline
			\cref{fig:graphchange}(b) $N_e = 50$ & $\eta = 2.5\times 10^{-5}$, $\alp = 5$ & $\eta = 5\times 10^{-5}$   \\ \hline
			\cref{fig:graphchange}(b) $N_e = 100$ & $\eta = 5\times 10^{-5}$, $\alp = 5$ & $\eta = 10^{-4}$   \\ \hline
			\cref{fig:graphchange}(b) $N_e = 150$ & $\eta = 10^{-4}$, $\alp = 5$ & $\eta = 2\times 10^{-4}$   \\ \hline
		\end{tabular}
	\end{table}
	
	In Fig.~\ref{fig:func1} and Fig.~\ref{fig:func2}, we compare the performance of APD(-SC) with Subgradient-Push and Push-DIGing/ADD-OPT, where we set $N_e = 50$ in the construction of $\CC$. In Fig.~\ref{fig:graphchange}, we set $N_e = 50, 100, 150$ to see how the convergence rates vary when the graphs are getting denser. The experiments in Fig.~\ref{fig:graphchange} are taken on Problem I. The experiments on Problem II with graphs of different density yield similar trends as in Fig.~\ref{fig:graphchange}. Thus, we omit them here. Since Subgradient-Push is much slower than APD(-SC), Push-DIGing/ADD-OPT in all topologies of Fig.~\ref{fig:graphchange}, then, we omit the trajectories of Subgradient-Push in Fig.~\ref{fig:graphchange}.
	
	We have the following observations based on the experiments in Fig.~\ref{fig:func1}-\ref{fig:graphchange}.
	\begin{itemize}
		\item APD(-SC) is faster than Push-DIGing/ADD-OPT and Subgradient-Push in both non-strongly and strongly convex cases with all choices of the number of added links $N_e$. 
		\item When the graphs get denser (equivalently, $N_e$ is larger), APD(-SC) and Push-DIGing/ADD-OPT converge faster in non-strongly and strongly convex cases.
	\end{itemize}
	
	\section{Conclusion}
	This paper proposes two accelerated decentralized methods, namely $\NAPD$ and $\NAPDSC$, for minimizing non-strongly and strongly convex objective functions, respectively. Both methods use only column stochastic matrices for information fusion and achieve the convergence rates of $O\pr{\frac{1}{k^2}}$ and $O\pr{\pr{1 - C\sqrt{\frac{\mu}{L}}}^k}$, respectively. To our best knowledge, $\NAPD$ and $\NAPDSC$ are the first decentralized methods over unbalanced directed graphs that match the optimal convergence rate of first-order methods (up to constant factors depending only on the mixing matrix). Their effectiveness has been confirmed by numerical experiments.
	
	\appendix
	\section{Constant list}\label{sec:constantlist}
	The constants used in the proofs of Section~\ref{sec:smooth} are listed in~\cref{tab:constantlist}.
	\begin{table}
		\centering
		\caption{Constant list for the proofs in Section~\ref{sec:smooth}}
		\label{tab:constantlist}
		\begin{tabular}{|c|c|c|}
			\hline
			Notations & Value / Description & First Appear \\ \hline
			$\tratC$ & depending only on mixing matrix $\CC$ & \cref{lem:mixnm}   \\ \hline
			$\vmi$ & depending only on mixing matrix $\CC$ and $\vv_0$ & \cref{lem:vkconvg1} \\ \hline  
			$\pc$ & ${4(1+\pa)^2 + 16\pa^2}$ & \cref{lem:fa1}  \\ \hline
			$\pb$ & ${4(1+\pa)^2 + 32\pa^2}$ & \cref{lem:fa2} \\ \hline
			$\pg$ & $2\tratC^2\vmi^2\nC{\vv_0 - \pp}^2\pb$ & \cref{lem:Lyprecc1} \\ \hline
			$\pk$ & $(3 + 12\wa + 26\wa^2)/(\pa\theta^2)$ for $\mu = 0$, $4/\pa$ for $\mu > 0$ & \cref{lem:unifysummtConU}   \\ \hline
		\end{tabular}
	\end{table}
	
	\section{Supporting lemmas}
	\begin{lemma}\label{lem:supportinglemma1}
		Under Assumption~\ref{assp:L}, for any $0 \leq k < K$ and Algorithm~\ref{alg:smooth&strongly-convex}, 
		\begin{align}
			\frac{1}{2}\nm{\ga_k}^2 - \frac{L^2}{2n}\mt{\UU_k - \one\xa_k}^2\leq   \jr{\ga_k, \gf\pr{\xa_k}} \leq \frac{3}{2}\nm{\ga_k}^2 + \frac{L^2}{2n}\mt{\UU_k - \one\xa_k}^2. \label{eq:jrgagf2}
		\end{align}
	\end{lemma}
	\begin{proof}
		We have
		\begin{align*}
			&\left|\jr{\ga_k, \gf\pr{\xa_k}}-\nm{\ga_k}^2 \right|\leq {1\over2}\nm{\ga_k}^2 +{1\over2}\|\ga_k- \gf\pr{\xa_k}\|^2\\
			= &{1\over2}\nm{\ga_k}^2 +{1\over2n^2}\nm{\one\tp\gF{\UU_k} - \one\tp\gF{\one\xa_k}}^2
			\leq  {1\over2}\nm{\ga_k}^2 +{L^2\over2n}\mt{\UU_k - \one\xa_k}^2.
		\end{align*}
		The lemma is proved.
	\end{proof}
	
	The next {supporting} lemma shows the convexity and smoothness with inexact gradients.
	Similar results have been derived by~\cite{jakovetic2014fast,qu2017harnessing,qu2019accelerated}.
	\begin{lemma}\label{lem:addUx1}
		Under Assumption~\ref{assp:L}, for any $0 \leq k < K$ and Algorithm~\ref{alg:smooth&strongly-convex},
		\eq{
			f\pr{\aa} - f\pr{\bb} &\leq \jr{\ga_k, \aa - \bb } + \frac{L}{2n}\mt{\UU_k - \one\aa}^2 - \frac{\mu}{2n}\mt{\UU_k - \one\bb}^2.
		}
	\end{lemma}
	
	\begin{proof}
		For any $i\in \calN$, by Assumption~\ref{assp:L}, we have
		\eq{
			&f_i(\aa) - f_i(\bb) = f_i(\aa) - f_i(\uu_{k,i}) + f_i(\uu_{k,i}) - f_i(\bb)  \\
			\leq& \jr{\na f_i(\uu_{k,i}), \aa -  \uu_{k,i}} + \frac{L}{2}\nm{\aa - \uu_{k,i}}^2 + \jr{\na f_i(\uu_{k,i}), \uu_{k,i} - \bb} - \frac{\mu}{2}\nm{\bb - \uu_{k,i}}^2 \\
			=& \jr{\na f_i(\uu_{k,i}), \aa - \bb} + \frac{L}{2}\nm{\aa - \uu_{k,i}}^2- \frac{\mu}{2}\nm{\bb - \uu_{k,i}}^2.
		}
		Taking the average over $i=1,\dots,n$ completes this proof.
	\end{proof}
	
	The next lemma is used to prove~\cref{lem:unifysummtConU}.
	\begin{lemma}\label{lem:k^2expd1}
		Given $\theta\in(0,1)$ and $ \wa\in(0, 1]$, for any $j \geq 0$, we have
		\eql{\label{LemmaB3}}{
			\sum\limits_{k=j}^{+\infty} \pr{1+\wa k}^2\babs{1 - \frac{\gap}{2}}^{k-j} &\leq \frac{{3 + 12\wa + 26\wa^2}}{\gap^3} \pr{1+\wa j}^2,\\
			\sum\limits_{k=j+1}^{+\infty} \pr{1+\wa k}^2\babs{1 - \frac{\gap}{2}}^{k-j-1} &\leq \frac{{2({3 + 12\wa + 26\wa^2})}}{\gap^3}\pr{1+\wa j}^2.
		}
	\end{lemma}
	
	\begin{proof}
		Given $w_1\in (0,1]$ and $j\geq 0$, for any integer $k \geq j$, we have
		$
		\frac{1 + \wa k}{1 + \wa j} = 1 +  \frac{\wa }{1 + \wa j}(k-j) .
		$
		This inequality gives
		\eq{
			\sum\limits_{k=j}^{+\infty}\pr{1+\wa k}^2 \babs{1 - \frac{\gap}{2}}^{k-j} & \leq \pr{1+\wa j}^2\sum\limits_{k=0}^{+\infty}\prb{1+{\wa\over 1+\wa j} k}^2\babs{1 - \frac{\gap}{2}}^k.
		}
		Then, we bound the term $\sum\limits_{k=0}^{+\infty}\prb{1+{\wa\over 1+\wa j} k}^2\ar{1 - \frac{\gap}{2}}^k$. For $\theta\in(0,1]$, we have
		\eq{
			&\sum\limits_{k=0}^{+\infty}\prB{1+{\wa\over 1+\wa j} k}^2\babs{1 - {\theta\over2}}^k  \leq      1+ \int_{1}^{+\infty} \prB{1+{\wa\over 1+\wa j} x}^2  \babs{1 - {\theta\over2}}^{x-1} dx \\
			&\quad  \leq 1+\int_{0}^{+\infty} \prB{1+{\wa\over 1+\wa j} (x + 1)}^2 e^{-\theta x/2} dx\\
			& \quad= \frac{\theta^3+2\theta^2  + (8\theta+4\theta^2){\wa\over 1+\wa j} + (16+8\theta+2\theta^2)\pr{\wa\over 1+\wa j}^2}{\theta^3} \leq \frac{3  + 12\wa + 26\wa^2}{\theta^3}.
		}
		The second inequality holds because $1-\theta/2\leq e^{-\theta/2}$, the third inequality uses $\theta \in (0, 1]$. The first inequality in~\eqref{LemmaB3} directly. The relation $1 - \frac{\theta}{2} \geq \frac{1}{2}$ and the first inequality in~\eqref{LemmaB3} give the second inequality in~\eqref{LemmaB3}.
	\end{proof}
	
	\bibliographystyle{siamplain}
	\bibliography{ref}

\begin{thebibliography}{10}

\bibitem{allen2017linear}
{\sc Z.~Allen-Zhu and L.~Orecchia}, {\em Linear coupling: An ultimate
  unification of gradient and mirror descent}, in 8th Innovations in
  Theoretical Computer Science Conference (ITCS 2017), Schloss
  Dagstuhl-Leibniz-Zentrum fuer Informatik, 2017.

\bibitem{baingana2014proximal}
{\sc B.~Baingana, G.~Mateos, and G.~B. Giannakis}, {\em Proximal-gradient
  algorithms for tracking cascades over social networks}, IEEE Journal of
  Selected Topics in Signal Processing, 8 (2014), pp.~563--575.

\bibitem{berman1994nonnegative}
{\sc A.~Berman and R.~J. Plemmons}, {\em Nonnegative Matrices in the
  Mathematical Sciences}, Society for Industrial and Applied Mathematic, 1994.

\bibitem{bertsekas1983distributed}
{\sc D.~P. Bertsekas}, {\em Distributed asynchronous computation of fixed
  points}, Mathematical Programming, 27 (1983), pp.~107--120.

\bibitem{boyd2011distributed}
{\sc S.~Boyd, N.~Parikh, and E.~Chu}, {\em Distributed optimization and
  statistical learning via the alternating direction method of multipliers},
  Now Publishers Inc, 2011.

\bibitem{bubeck2015convex}
{\sc S.~Bubeck}, {\em Convex optimization: Algorithms and complexity},
  Foundations and Trends{\textregistered} in Machine Learning, 8 (2015),
  pp.~231--357.

\bibitem{cevher2014convex}
{\sc V.~Cevher, S.~Becker, and M.~Schmidt}, {\em Convex optimization for big
  data: Scalable, randomized, and parallel algorithms for big data analytics},
  IEEE Signal Processing Magazine, 31 (2014), pp.~32--43.

\bibitem{chen2008robust}
{\sc R.~Chen, J.-M. Park, and K.~Bian}, {\em Robust distributed spectrum
  sensing in cognitive radio networks}, in IEEE INFOCOM 2008-The 27th
  Conference on Computer Communications, IEEE, 2008, pp.~1876--1884.

\bibitem{cohen2017distributed}
{\sc K.~Cohen, A.~Nedi{\'c}, and R.~Srikant}, {\em Distributed learning
  algorithms for spectrum sharing in spatial random access wireless networks},
  IEEE Transactions on Automatic Control, 62 (2017), pp.~2854--2869.

\bibitem{cohen2017projected}
{\sc K.~Cohen, A.~Nedi{\'c}, and R.~Srikant}, {\em On projected stochastic
  gradient descent algorithm with weighted averaging for least squares
  regression}, IEEE Transactions on Automatic Control, 62 (2017),
  pp.~5974--5981.

\bibitem{misc_banknote_authentication_267}
{\sc H.~Daorksen and V.~Lohweg}, {\em Banknote authentication}.
\newblock UCI Machine Learning Repository, 2013.

\bibitem{devolder2014first}
{\sc O.~Devolder, F.~Glineur, and Y.~Nesterov}, {\em First-order methods of
  smooth convex optimization with inexact oracle}, Mathematical Programming,
  146 (2014), pp.~37--75.

\bibitem{devolder2013first}
{\sc O.~Devolder, F.~Glineur, Y.~Nesterov, et~al.}, {\em First-order methods
  with inexact oracle: the strongly convex case}, CORE Discussion Papers,
  2013016 (2013), p.~47.

\bibitem{di2015distributed}
{\sc P.~Di~Lorenzo and G.~Scutari}, {\em Distributed nonconvex optimization
  over networks}, in 2015 IEEE 6th International Workshop on Computational
  Advances in Multi-Sensor Adaptive Processing (CAMSAP), IEEE, 2015,
  pp.~229--232.

\bibitem{di2016next}
{\sc P.~Di~Lorenzo and G.~Scutari}, {\em Next: In-network nonconvex
  optimization}, IEEE Transactions on Signal and Information Processing over
  Networks, 2 (2016), pp.~120--136.

\bibitem{dvinskikh2019decentralized}
{\sc D.~Dvinskikh and A.~Gasnikov}, {\em Decentralized and parallel primal and
  dual accelerated methods for stochastic convex programming problems}, arXiv
  preprint arXiv:1904.09015,  (2019).

\bibitem{fallah2019robust}
{\sc A.~Fallah, M.~Gurbuzbalaban, A.~Ozdaglar, U.~Simsekli, and L.~Zhu}, {\em
  Robust distributed accelerated stochastic gradient methods for multi-agent
  networks}, arXiv preprint arXiv:1910.08701,  (2019).

\bibitem{forrester2007multi}
{\sc A.~I. Forrester, A.~S{\'o}bester, and A.~J. Keane}, {\em Multi-fidelity
  optimization via surrogate modelling}, in Proceedings of the Royal Society of
  London A: Mathematical, Physical and Engineering Sciences, vol.~463, The
  Royal Society, 2007, pp.~3251--3269.

\bibitem{gharesifard2012distributed}
{\sc B.~Gharesifard and J.~Cort{\'e}s}, {\em Distributed strategies for
  generating weight-balanced and doubly stochastic digraphs}, European Journal
  of Control, 18 (2012), pp.~539--557.

\bibitem{iutzeler2015explicit}
{\sc F.~Iutzeler, P.~Bianchi, P.~Ciblat, and W.~Hachem}, {\em Explicit
  convergence rate of a distributed alternating direction method of
  multipliers}, IEEE Transactions on Automatic Control, 61 (2015),
  pp.~892--904.

\bibitem{jakovetic2014fast}
{\sc D.~Jakoveti{\'c}, J.~Xavier, and J.~M. Moura}, {\em Fast distributed
  gradient methods}, IEEE Transactions on Automatic Control, 59 (2014),
  pp.~1131--1146.

\bibitem{kempe2003gossip}
{\sc D.~Kempe, A.~Dobra, and J.~Gehrke}, {\em Gossip-based computation of
  aggregate information}, in 44th Annual IEEE Symposium on Foundations of
  Computer Science, 2003. Proceedings., IEEE, 2003, pp.~482--491.

\bibitem{kovalev2020optimal}
{\sc D.~Kovalev, A.~Salim, and P.~Richt{\'a}rik}, {\em Optimal and practical
  algorithms for smooth and strongly convex decentralized optimization},
  Advances in Neural Information Processing Systems, 33 (2020).

\bibitem{lan2020communication}
{\sc G.~Lan, S.~Lee, and Y.~Zhou}, {\em Communication-efficient algorithms for
  decentralized and stochastic optimization}, Mathematical Programming, 180
  (2020), pp.~237--284.

\bibitem{li2020decentralized}
{\sc H.~Li, C.~Fang, W.~Yin, and Z.~Lin}, {\em Decentralized accelerated
  gradient methods with increasing penalty parameters}, IEEE Transactions on
  Signal Processing, 68 (2020), pp.~4855--4870.

\bibitem{li2020revisiting}
{\sc H.~Li and Z.~Lin}, {\em Revisiting extra for smooth distributed
  optimization}, SIAM Journal on Optimization, 30 (2020), pp.~1795--1821.

\bibitem{li2021accelerated}
{\sc H.~Li and Z.~Lin}, {\em Accelerated gradient tracking over time-varying
  graphs for decentralized optimization}, arXiv preprint arXiv:2104.02596,
  (2021).

\bibitem{li2019decentralized}
{\sc Z.~Li, W.~Shi, and M.~Yan}, {\em A decentralized proximal-gradient method
  with network independent step-sizes and separated convergence rates}, IEEE
  Transactions on Signal Processing, 67 (2019), pp.~4494--4506.

\bibitem{liu2020linear}
{\sc X.~Liu, Y.~Li, R.~Wang, J.~Tang, and M.~Yan}, {\em Linear convergent
  decentralized optimization with compression}, in International Conference on
  Learning Representations, 2021.

\bibitem{makhdoumi2017convergence}
{\sc A.~Makhdoumi and A.~Ozdaglar}, {\em Convergence rate of distributed admm
  over networks}, IEEE Transactions on Automatic Control, 62 (2017),
  pp.~5082--5095.

\bibitem{mateos2010distributed}
{\sc G.~Mateos, J.~A. Bazerque, and G.~B. Giannakis}, {\em Distributed sparse
  linear regression}, IEEE Transactions on Signal Processing, 58 (2010),
  pp.~5262--5276.

\bibitem{mateos2012distributed}
{\sc G.~Mateos and G.~B. Giannakis}, {\em Distributed recursive least-squares:
  Stability and performance analysis}, IEEE Transactions on Signal Processing,
  60 (2012), pp.~3740--3754.

\bibitem{nedic2010asynchronous}
{\sc A.~Nedic}, {\em Asynchronous broadcast-based convex optimization over a
  network}, IEEE Transactions on Automatic Control, 56 (2010), pp.~1337--1351.

\bibitem{nedic2014distributed}
{\sc A.~Nedi{\'c} and A.~Olshevsky}, {\em Distributed optimization over
  time-varying directed graphs}, IEEE Transactions on Automatic Control, 60
  (2014), pp.~601--615.

\bibitem{nedic2017achieving}
{\sc A.~Nedic, A.~Olshevsky, and W.~Shi}, {\em Achieving geometric convergence
  for distributed optimization over time-varying graphs}, SIAM Journal on
  Optimization, 27 (2017), pp.~2597--2633.

\bibitem{nedic2017fast}
{\sc A.~Nedi{\'c}, A.~Olshevsky, and C.~A. Uribe}, {\em Fast convergence rates
  for distributed non-bayesian learning}, IEEE Transactions on Automatic
  Control, 62 (2017), pp.~5538--5553.

\bibitem{nedic2009distributed}
{\sc A.~Nedic and A.~Ozdaglar}, {\em Distributed subgradient methods for
  multi-agent optimization}, IEEE Transactions on Automatic Control, 54 (2009),
  pp.~48--61.

\bibitem{nesterov1983method}
{\sc Y.~Nesterov}, {\em A method for unconstrained convex minimization problem
  with the rate of convergence o (1/k\^{} 2)}, in Doklady an ussr, vol.~269,
  1983, pp.~543--547.

\bibitem{olshevsky2018fully}
{\sc A.~Olshevsky, I.~C. Paschalidis, and A.~Spiridonoff}, {\em Fully
  asynchronous push-sum with growing intercommunication intervals}, in 2018
  Annual American Control Conference (ACC), IEEE, 2018, pp.~591--596.

\bibitem{pu2020distributed}
{\sc S.~Pu and A.~Nedi{\'c}}, {\em Distributed stochastic gradient tracking
  methods}, Mathematical Programming,  (2020), pp.~1--49.

\bibitem{pu2020push}
{\sc S.~Pu, W.~Shi, J.~Xu, and A.~Nedic}, {\em Push-pull gradient methods for
  distributed optimization in networks}, IEEE Transactions on Automatic
  Control,  (2020).

\bibitem{qu2017harnessing}
{\sc G.~Qu and N.~Li}, {\em Harnessing smoothness to accelerate distributed
  optimization}, IEEE Transactions on Control of Network Systems, 5 (2017),
  pp.~1245--1260.

\bibitem{qu2019accelerated}
{\sc G.~Qu and N.~Li}, {\em Accelerated distributed nesterov gradient descent},
  IEEE Transactions on Automatic Control, 65 (2019), pp.~2566--2581.

\bibitem{qureshi2020s}
{\sc M.~I. Qureshi, R.~Xin, S.~Kar, and U.~A. Khan}, {\em S-addopt:
  Decentralized stochastic first-order optimization over directed graphs}, IEEE
  Control Systems Letters, 5 (2020), pp.~953--958.

\bibitem{ram2010distributed}
{\sc S.~S. Ram, A.~Nedi{\'c}, and V.~V. Veeravalli}, {\em Distributed
  stochastic subgradient projection algorithms for convex optimization},
  Journal of optimization theory and applications, 147 (2010), pp.~516--545.

\bibitem{saadatniaki2020decentralized}
{\sc F.~Saadatniaki, R.~Xin, and U.~A. Khan}, {\em Decentralized optimization
  over time-varying directed graphs with row and column-stochastic matrices},
  IEEE Transactions on Automatic Control, 65 (2020), pp.~4769--4780.

\bibitem{scaman2017optimal}
{\sc K.~Scaman, F.~Bach, S.~Bubeck, Y.~T. Lee, and L.~Massouli{\'e}}, {\em
  Optimal algorithms for smooth and strongly convex distributed optimization in
  networks}, in international conference on machine learning, PMLR, 2017,
  pp.~3027--3036.

\bibitem{shi2015extra}
{\sc W.~Shi, Q.~Ling, G.~Wu, and W.~Yin}, {\em Extra: An exact first-order
  algorithm for decentralized consensus optimization}, SIAM Journal on
  Optimization, 25 (2015), pp.~944--966.

\bibitem{song2021compressed}
{\sc Z.~Song, L.~Shi, S.~Pu, and M.~Yan}, {\em Compressed gradient tracking for
  decentralized optimization over general directed networks}, arXiv preprint
  arXiv:2106.07243,  (2021).

\bibitem{sun2019distributed}
{\sc Y.~Sun, A.~Daneshmand, and G.~Scutari}, {\em Distributed optimization
  based on gradient-tracking revisited: Enhancing convergence rate via
  surrogation}, arXiv preprint arXiv:1905.02637,  (2019).

\bibitem{terelius2011decentralized}
{\sc H.~Terelius, U.~Topcu, and R.~M. Murray}, {\em Decentralized multi-agent
  optimization via dual decomposition}, IFAC proceedings volumes, 44 (2011),
  pp.~11245--11251.

\bibitem{tian2020achieving}
{\sc Y.~Tian, Y.~Sun, and G.~Scutari}, {\em Achieving linear convergence in
  distributed asynchronous multiagent optimization}, IEEE Transactions on
  Automatic Control, 65 (2020), pp.~5264--5279.

\bibitem{tsianos2012consensus}
{\sc K.~I. Tsianos, S.~Lawlor, and M.~G. Rabbat}, {\em Consensus-based
  distributed optimization: Practical issues and applications in large-scale
  machine learning}, in 2012 50th annual allerton conference on communication,
  control, and computing (allerton), IEEE, 2012, pp.~1543--1550.

\bibitem{tsitsiklis1986distributed}
{\sc J.~Tsitsiklis, D.~Bertsekas, and M.~Athans}, {\em Distributed asynchronous
  deterministic and stochastic gradient optimization algorithms}, IEEE
  transactions on automatic control, 31 (1986), pp.~803--812.

\bibitem{tsitsiklis1984problems}
{\sc J.~N. Tsitsiklis}, {\em Problems in decentralized decision making and
  computation.}, tech. report, Massachusetts Inst of Tech Cambridge Lab for
  Information and Decision Systems, 1984.

\bibitem{wu2017decentralized}
{\sc T.~Wu, K.~Yuan, Q.~Ling, W.~Yin, and A.~H. Sayed}, {\em Decentralized
  consensus optimization with asynchrony and delays}, IEEE Transactions on
  Signal and Information Processing over Networks, 4 (2017), pp.~293--307.

\bibitem{xi2018linear}
{\sc C.~Xi, V.~S. Mai, R.~Xin, E.~H. Abed, and U.~A. Khan}, {\em Linear
  convergence in optimization over directed graphs with row-stochastic
  matrices}, IEEE Transactions on Automatic Control, 63 (2018), pp.~3558--3565.

\bibitem{xi2017distributed}
{\sc C.~Xi, Q.~Wu, and U.~A. Khan}, {\em On the distributed optimization over
  directed networks}, Neurocomputing, 267 (2017), pp.~508--515.

\bibitem{xi2017add}
{\sc C.~Xi, R.~Xin, and U.~A. Khan}, {\em Add-opt: Accelerated distributed
  directed optimization}, IEEE Transactions on Automatic Control, 63 (2017),
  pp.~1329--1339.

\bibitem{xin2018linear}
{\sc R.~Xin and U.~A. Khan}, {\em A linear algorithm for optimization over
  directed graphs with geometric convergence}, IEEE Control Systems Letters, 2
  (2018), pp.~315--320.

\bibitem{xin2019distributed2}
{\sc R.~Xin and U.~A. Khan}, {\em Distributed heavy-ball: A generalization and
  acceleration of first-order methods with gradient tracking}, IEEE
  Transactions on Automatic Control, 65 (2019), pp.~2627--2633.

\bibitem{xin2019distributed}
{\sc R.~Xin, A.~K. Sahu, U.~A. Khan, and S.~Kar}, {\em Distributed stochastic
  optimization with gradient tracking over strongly-connected networks}, in
  2019 IEEE 58th Conference on Decision and Control (CDC), IEEE, 2019,
  pp.~8353--8358.

\bibitem{xiong2020privacy}
{\sc Y.~Xiong, J.~Xu, K.~You, J.~Liu, and L.~Wu}, {\em Privacy-preserving
  distributed online optimization over unbalanced digraphs via subgradient
  rescaling}, IEEE Transactions on Control of Network Systems, 7 (2020),
  pp.~1366--1378.

\bibitem{xu2020accelerated}
{\sc J.~Xu, Y.~Tian, Y.~Sun, and G.~Scutari}, {\em Accelerated primal-dual
  algorithms for distributed smooth convex optimization over networks}, in
  International Conference on Artificial Intelligence and Statistics, PMLR,
  2020, pp.~2381--2391.

\bibitem{xu2015augmented}
{\sc J.~Xu, S.~Zhu, Y.~C. Soh, and L.~Xie}, {\em Augmented distributed gradient
  methods for multi-agent optimization under uncoordinated constant stepsizes},
  in 2015 54th IEEE Conference on Decision and Control (CDC), IEEE, 2015,
  pp.~2055--2060.

\bibitem{ye2020decentralized}
{\sc H.~Ye, Z.~Zhou, L.~Luo, and T.~Zhang}, {\em Decentralized accelerated
  proximal gradient descent}, Advances in Neural Information Processing
  Systems, 33 (2020).

\bibitem{yuan2016convergence}
{\sc K.~Yuan, Q.~Ling, and W.~Yin}, {\em On the convergence of decentralized
  gradient descent}, SIAM Journal on Optimization, 26 (2016), pp.~1835--1854.

\bibitem{zhangfully}
{\sc J.~Zhang and K.~You}, {\em Fully asynchronous distributed optimization
  with linear convergence in directed networks}, arXiv preprint
  arXiv:1901.08215,  (2021).

\end{thebibliography}
\end{document}